\newtheorem{theorem}{Theorem}[section]
\newtheorem{definition}{Definition}[section]
\newtheorem{lemma}{Lemma}[section]
\newtheorem{assumption}{Assumption}[section]
\numberwithin{equation}{section}
\newenvironment{proof3.1}{{\noindent \bf Proof of Theorem 3.1.}}{\hfill $\Box$}
\begin{document}

\setlength{\baselineskip}{16pt}{\setlength\arraycolsep{2pt}}

\title{Exponential stabilization for magnetic effected piezoelectric beams with time-varying delay and time-dependent weights}

\author{Aowen Kong, Wenjun Liu\footnote{Corresponding author. \ \   Email address: wjliu@nuist.edu.cn (W. J. Liu).}\ \ and  Yanning An
\medskip\\School of Mathematics and Statistics,
Nanjing University of Information Science \\
 and Technology, Nanjing 210044, China}


\date{}
\maketitle

\begin{abstract}
This paper is concerned with system of magnetic effected piezoelectric beams with interior time-varying delay and time-dependent weights, in which the beam is clamped at the two side points subject to a single distributed state feedback controller with a time-varying delay. By combining the semigroup theory with Kato's variable norm technique, we obtain the existence and uniqueness of solution. By imposing appropriate assumptions on the time-varying delay term and time-dependent weights, we introduce suitable perturbed Lyapunov functional to obtain exponential stability estimates.
\end{abstract}

\noindent {\bf 2010 Mathematics Subject Classification:} 35Q60, 35Q74, 35L20, 93D15. \\
\noindent {\bf Keywords:} Time-varying delay, time-dependent weights, exponential decay, piezoelectric beams.

\maketitle

\section{Introduction }
In this paper, we focus our attention on system of magnetic effected piezoelectric beams with time-varying delay and time-dependent weights
\begin{align}
	\label{1.1}&\rho v_{tt}-\alpha v_{xx}+\gamma\beta p_{xx}+\delta_{1}(t)v_{t}(x,t)+\delta_{2}(t)v_{t}(x,t-\tau(t))=0,& (x,t)\in(0,L)\times(0,T),\\
	\label{1.2}&\mu p_{tt}-\beta p_{xx}+\gamma\beta v_{xx}=0,& (x,t)\in(0,L)\times(0,T),
\end{align}
with the initial and boundary conditions
\begin{align}
	\label{1.3}&v(x,0)=v_{0}(x),\quad p(x,0)=p_{0}(x),\quad\quad x\in(0,L),\\
	\label{1.4}&v_{t}(x,0)=v_{1}(x),\quad p_{t}(x,0)=p_{1}(x),\quad\ x\in(0,L),\\
	\label{1.5}&v(0,t)=\alpha v_{x}(L,t)-\gamma\beta p_{x}(L,t)=0,\quad t\in(0,T),\\
	\label{1.6}&p(0,t)=p_{x}(L,t)-\gamma v_{x}(L,t)=0,\quad\ \ \ \  t\in(0,T),
\end{align}
where $\rho$, $\alpha$, $\gamma$, $\mu$, $\beta$ are positive constants with  $\alpha$, $\gamma$, $\beta$ satisfying $\alpha=\alpha_{1}+\gamma^{2}\beta$, and $\delta_{1}$, $\delta_{2}$ are non-constant weights.

In recent years, piezoelectric materials, which have good economic advantage, are widely used in designing devices, such as sensors \cite{a2015,B1999,D1998,D2001,P2015,X2008,Z2015}. Many structures using piezoelectric materials are designed according to different applications \cite{a2015,P2015,X2008}. An increasing interest has been developed to consider problems of piezoelectric materials \cite{C1964,G2000,K1969,P1996}.

Machanical, magnetic, and electrical effects are involved in piezoelectric materials. It is vital to know the disturbance of magnetic or electrical on structural mechanical effects. The electromagnetic coupling problem was given by the Maxwell equations \cite{Y2006}, while the theory of Timoshenko \cite{D2010} described the mechanical behavior of beam.
Morris and \"Ozer \cite{M2013,M2014} established the theory of piezoelectric materials, in which they combined machanical, magnetic, and electrical effects
\begin{align*}
		&\rho u_{tt}-\alpha v_{xx}+\gamma\beta p_{xx}=0,\quad\quad (x,t)\in(0,L)\times\mathbb{R}^+,\\
		&\mu p_{tt}-\beta p_{xx}+\gamma\beta v_{xx}=0,\quad\quad (x,t)\in(0,L)\times\mathbb{R}^+,
\end{align*}
with the boundary conditions
\begin{align*}
		&v(0,t)=\alpha v_{x}(L,t)-\gamma\beta p_{x}(L,t)=0,\quad\quad\quad\quad\; \; \; \; t\in\mathbb{R}^+,\\
		&p(0,t)=\beta p_{x}(L,t)-\gamma\beta v_{x}(L,t)+V(t)=0,\quad\quad t\in\mathbb{R}^+,
\end{align*}
where $V(t)=p_t(L,t)/h$, $\mathbb{R}^+=(0,\infty)$, and $\gamma$, $\beta$, $\mu$, $\alpha$, $\rho$ denote the piezoelectric coefficient, the beam coefficient of impermeability, the magnetic permeability, the elastic stiffness and the mass density per unit volume, respectively. It was proved that the system, which has only one control on the beams' electrodes, is not exponentially stable. Besides, Ramos et al. \cite{r2018} considered the piezoelectric beams system focusing on the effect of the internal damping item $\delta v_t$, and used the energy method to establish the exponential stability. For systems of piezoelectric beams with magnetic effect and two controls on the electrodes of the beam, we mention the work of Ramos et al. \cite{R2019}, in which they considered the system
\begin{align*}
	&\rho u_{tt}-\alpha v_{xx}+\gamma\beta p_{xx}=0,\quad\quad(x,t)\in(0,L)\times(0,T),\\
	&\mu p_{tt}-\beta p_{xx}+\gamma\beta v_{xx}=0,\quad\quad (x,t)\in(0,L)\times(0,T),
\end{align*}
with the boundary conditions
\begin{align*}
	&v(0,t)=\alpha v_{x}(L,t)-\gamma\beta p_{x}(L,t)+\xi_1\dfrac{v_t(L,t)}{h}=0,\quad\quad t\in(0,T),\\ \label{13}
	&p(0,t)=\beta p_{x}(L,t)-\gamma\beta v_{x}(L,t)+\xi_2\dfrac{p_t(L,t)}{h}=0,\quad\quad t\in(0,T),
\end{align*}
and established the equivalence between boundary observability and exponential stabilization for piezoelectric beams with magnetic effect.
More recently,  Freitasa  et al. \cite{f2021} derived a system of partial differential equations-model for a piezoelectric beam with thermal and magnetic effects by using a thorough variational approach, and studied the long-time dynamics for the first time.

Recently, the control of partial differential equations with time delay has become a hot research topic,  see \cite{b2020,clz2018,K2011,LCC2021,LC2017,LH2021,N2009}. In \cite{N2009}, under the condition $0\leq |\delta_2|\leq\sqrt{1-d}$ $\delta_1 $, Nicaise et al. proved that the heat equation with time-varying boundary delay is exponentially stable
\begin{align*}
	&u_{t}-au_{xx}=0, \quad\quad\quad\quad\quad\quad\quad\quad\quad\quad\quad\quad\; \; (x,t)\in (0,\pi)\times(0,\infty),\\
	&u(0,t)=0,\quad\quad\quad\quad\quad\quad\quad\quad\quad\quad\quad\quad\quad\quad\; t\in (0,\infty),\\
	&u_x(\pi,t)=-\delta_1u(\pi,t)-\delta_2u(\pi,t-\tau(t)),\quad\quad t\in (0,\infty),\\
	&u(x,0)=u_0(x),\quad\quad\quad\quad\quad\quad\quad\quad\quad\quad\quad\; \; \; \;  x\in (0,\pi),\\
	&u(\pi,t-\tau(0))=f_0(t-\tau(0)),\quad\quad\quad\quad\quad\; \; \; \; \; t\in (0,\tau(0)),
\end{align*}
and they proved that the wave equation with time-varying boundary delay is also exponentially stable. Kirane et al. \cite{K2011} considered the exponential stability of the Timoshenko beams by interior time-dependent delay term feedbacks
\begin{align*}
	&\rho_1\varphi_{tt}-k(\varphi_x+\psi_x)=0,&& (x,t)\in(0,1)\times(0,\infty),\\
	&\rho_2\psi_{tt}-b\psi_{xx}+k(\varphi_x+\psi)+\delta_{1}\psi_{t}+\delta_{2}\psi_{t}(x,t-\tau(t))=0,&& (x,t)\in(0,1)\times(0,\infty),\\
	&\varphi(0,t)=\varphi(1,t)=\psi(0,t)=\psi(1,t)=0,&& t\in(0,\infty),\\
	&\varphi(x,0)=\varphi_0,\varphi_t(x,0)=\varphi_1,\psi(x,0)=\psi_0,&& x\in(0,1),\\
	&\psi_t(x,0)=\psi_1,\psi_t(x,t-\tau(0))=f_0(t-\tau(0)),&& (x,t)\in(0,1)\times(0,\tau(0)),
\end{align*}
and established the exponential decay of the energy via suitable Lyapunov functionals. Barros et al. \cite{b2020}  considered the wave equation with a weak internal damping, non-constant delay and nonlinear weights given by
\begin{align*}
	&u_{tt}(x,t)-u_{xx}(x,t)+\delta_1(t)u_t(x,t)+\delta_2(t)u_t(x,t-\tau(t))=0,&&(x,t)\in(0,L)\times(0,\infty),\\
	&u(0,t)=u(L,t)=0,&&t\in(0,\infty),\\
	&u(x,0)=u_0(x),u_t(x,0)=u_1(x),&&x\in(0,L),\\
	&u_t(x,t-\tau(0))=f_0(x,t-\tau(0)),&&(x,t)\in(0,L)\times(0,\tau(0)),
\end{align*}
under proper conditions on nonlinear weights $\delta_1(t)$, $\delta_2(t)$ and non-constant delay $\tau(t)$, they proved global existence and estimative the decay
rate for the energy.

More recently, Ramos et al. \cite{R2021} studied the exponential stabilization of piezoelectric beams  system \eqref{1.1}-\eqref{1.2} with  constant delay, i.e., $\tau(t)$ ia a constant. We consider here the result on magnetic effected piezoelectric beams with interior time-varying delay and time-dependent weights.
The motivation for choosing weights $\delta_1$ and $\delta_2$ as functions varying over time comes from the work of Benaissa et al. \cite{B2014}, in which the authors considered a wave eqaution with delay and nonlinear weights. Since the weights are nonlinear, the operator is nonautonomous, we use the Kato variable norm technique to show that the system is well-posed. Finally, we construct the perturbed Lyapunov functional to prove the exponential stabilization for the magnetic effected piezoelectric beams with time-varying delay and time-dependent weights.

This paper is arranged as follows. In Section 2, we present some notations and prove the dissipative property of the energy. In Section 3, we use the Kato variable norm technique to show the existence and uniqueness of solution. In Section 4, we present the result of exponential stability.

\section{Preliminaries}
In this section, we propose hypothesis for the time-varying delay and time-dependent weights \cite{b2020,N2011}.
\begin{assumption}\label{as2.1}
	We assume that there exist positive constants $\tau_0, \overline{\tau}$ such that
	\begin{align}\label{2.1}
		0<\tau_0\leq\tau(t)\leq\overline{\tau}.
	\end{align}	
	Moreover, we assume that
	\begin{align}\label{2.2}
		\tau'(t)\leq d<1, \quad \forall t>0,
	\end{align}	
	\begin{align}\label{2.3}
		\tau\in W^{2,\infty}([0,T]), \quad \forall T>0,
	\end{align}	
where $d$ is a constant.
\end{assumption}
\begin{assumption}
	We assume that $\delta_{1}:\mathbb{R}^+\to[0,\infty)$ is a non-increasing function of class $C^1(\mathbb{R}^+)$ satisfying
	\begin{align}\label{as1}
		\left|\dfrac{\delta_1'(t)}{\delta_1(t)}\right|\leq M_1,\quad0<\delta_0\leq\delta_{1}(t),\quad \forall t\geq 0,
	\end{align}	
where $\alpha_0$ and $M_1$ are constants such that $M_1>0$.
\end{assumption}
\begin{assumption}
	We assume that $\delta_{2}:\mathbb{R}^+\to\mathbb{R}$ is a function of class $C^1(\mathbb{R}^+)$, which is not necessarily positives or monotones, such that
	\begin{align}
		\label{as2}|\delta_2(t)|\leq\beta_0\delta_1(t), \\
		\label{as3}|\delta_2'(t)|\leq M_2\delta_1(t),
	\end{align}	
	for some $0<\beta_0<\sqrt{1-d}$ and $M_2>0$.
\end{assumption}

As in Nicaise et al. \cite{N2011}, we introduce a new dependent variable to deal with the delay feedback term
\begin{align}\label{2.7}
	z(x,\rho,t)=v_t(x,t-\tau(t)\rho),\quad  x\in(0,L), \rho\in(0,1), t\in(0,T).
\end{align}

Using the new variable, problem \eqref{1.1}-\eqref{1.6} is equivalent to
\begin{align}
\label{2.8}&\rho v_{tt}-\alpha v_{xx}+\gamma\beta p_{xx}+\delta_{1}(t)v_{t}(x,t)+\delta_{2}(t)z(x,1,t)=0, (x,t)\in(0,L)\times(0,T),\\
\label{2.9}&\mu p_{tt}-\beta p_{xx}+\gamma\beta v_{xx}=0,\quad\quad\quad\quad\quad\quad\quad\quad\quad\quad\quad\quad\quad (x,t)\in(0,L)\times(0,T),\\
\label{2.10}&\tau(t)z_t(x,\rho,t)+(1-\tau'(t)\rho)z_{\rho}(x,\rho,t)=0,\quad\quad\quad\quad\quad\quad (x,\rho,t)\in(0,L)\times(0,1)\times(0,T),
\end{align}
with the boundary conditions
\begin{align}
\label{2.11}&v(0,t)=\alpha v_{x}(L,t)-\gamma\beta p_{x}(L,t)=0,\; \; \; \; \; \;  t\in(0,T),\\
\label{2.12}&p(0,t)=p_{x}(L,t)-\gamma v_{x}(L,t)=0,\quad\quad\quad t\in(0,T),\\
\label{2.13}&z(x,0,t)=v_t(x,t),\quad\quad\quad\quad\quad\quad\quad\quad\quad(x,t)\in(0,L)\times(0,T),
\end{align}
and the initial conditions
\begin{align}
\label{2.14}&\left( v(x,0), p(x,0),v_{t}(x,0), p_{t}(x,0)\right) =\left( v_{0}(x), p_{0}(x),v_{1}(x),p_{1}(x)\right) ,&& x\in(0,L),\\
\label{2.15}&z(x,\rho,0)=g_0(x,-\rho\tau(0)),&&(x,\rho)\in(0,L)\times(0,1).
\end{align}

\begin{definition}
	Let $($v,p$)$ be the solution of system \eqref{2.8}-\eqref{2.15}. We define the energy of system \eqref{2.8}-\eqref{2.15}
	\begin{align}\label{energy}
		E(t)=&\dfrac{\rho}{2}\int_0^L|v_t|^2dx+\dfrac{\mu}{2}\int_0^L|p_t|^2dx+\dfrac{\alpha_1}{2}\int_0^L|v_x|^2dx\nonumber\\
		&+\dfrac{\beta}{2}\int_0^L|\gamma v_x-p_x|^2dx+\dfrac{\xi(t)}{2}\int_{t-\tau(t)}^t\int_0^Le^{\lambda(s-t)}v_t^2(x,s)dxds,
	\end{align}
	where $\xi(t)=\overline{\xi}\delta_1(t)$ is a non-increasing function of class $C^1(\mathbb{R}^+)$. Besides, we can choose and fix the constant $\overline{\xi}$ such that
	\begin{align}
		\label{energyas1}\dfrac{\beta_0}{\sqrt{1-d}}<\overline{\xi}<2-\dfrac{\beta_0}{\sqrt{1-d}}.\\
		\label{energyas2}\lambda<\dfrac{1}{\overline{\tau}}\left|\log\dfrac{|\delta_2|}{\sqrt{1-d}}\right|.
	\end{align}
\end{definition}

\begin{lemma}\label{Le4.1}
	Under the assumptions \eqref{as1} and \eqref{as2}, we can reach an agreement that the following inequality holds:
	\begin{align}\label{energy'}
		E'(t)\leq-C\int_0^Lv_t^2(x,t)+v_t^2\left(x,t-\tau(t)\right)dx -C\int_{t-\tau(t)}^t\int_0^Le^{\lambda(s-t)}v_t^2(x,s)dxds.
	\end{align}
\end{lemma}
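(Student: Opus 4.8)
The plan is to differentiate $E(t)$ term by term and to show that, after inserting the equations of motion and integrating by parts, every conservative (stiffness) contribution cancels, leaving only dissipative terms that match the right-hand side of \eqref{energy'}. I would split $E$ into its four ``mechanical'' terms and the fifth ``delay'' term.

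For the mechanical part I would use \eqref{2.8} to replace $\rho v_{tt}$ and \eqref{2.9} to replace $\mu p_{tt}$, and then integrate by parts in $x$. The boundary contributions are $[v_t(\alpha v_x-\gamma\beta p_x)]_0^L$ and $[p_t(\beta p_x-\gamma\beta v_x)]_0^L$; both vanish, at $x=L$ by the conditions \eqref{2.11}--\eqref{2.12} and at $x=0$ because $v(0,t)=p(0,t)=0$ force $v_t(0,t)=p_t(0,t)=0$. Collecting the surviving interior integrals and using $\alpha=\alpha_1+\gamma^2\beta$, I expect the coefficients of $\int_0^L v_xv_{xt}\,dx$, $\int_0^L p_xp_{xt}\,dx$, $\int_0^L v_xp_{xt}\,dx$ and $\int_0^L p_xv_{xt}\,dx$ to cancel identically, so that the mechanical part of $E'$ collapses to $-\delta_1(t)\int_0^L v_t^2\,dx-\delta_2(t)\int_0^L v_t\,z(x,1,t)\,dx$.

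For the delay term $\tfrac{\xi(t)}{2}J(t)$ with $J(t)=\int_{t-\tau(t)}^t\int_0^L e^{\lambda(s-t)}v_t^2\,dx\,ds$, I would write the integrand as $e^{-\lambda t}\int_0^L e^{\lambda s}v_t^2\,dx$ and apply the Leibniz rule for the moving endpoints $t$ and $t-\tau(t)$. Using $z(x,0,t)=v_t(x,t)$ and $z(x,1,t)=v_t(x,t-\tau(t))$, this gives $J'(t)=\int_0^L v_t^2(x,t)\,dx-(1-\tau'(t))e^{-\lambda\tau(t)}\int_0^L v_t^2(x,t-\tau(t))\,dx-\lambda J(t)$. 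Since $\xi=\overline{\xi}\delta_1$ is non-increasing we have $\xi'\le0$, so the $J$-part of $E'$ is bounded above by $-\tfrac{\lambda\xi(t)}{2}J(t)\le-\tfrac{\lambda\overline{\xi}\delta_0}{2}J(t)$, which already produces the last term of \eqref{energy'}. Combining the two parts, $E'(t)$ equals
\[
\Big(\tfrac{\xi(t)}{2}-\delta_1(t)\Big)\!\int_0^L\!v_t^2\,dx-\delta_2(t)\!\int_0^L\! v_t\,z(x,1,t)\,dx-\tfrac{(1-\tau'(t))e^{-\lambda\tau(t)}\xi(t)}{2}\!\int_0^L\!v_t^2(x,t-\tau(t))\,dx+\tfrac{\xi'(t)-\lambda\xi(t)}{2}J(t).
\]

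Finally I would estimate the cross term by Young's inequality, $|\delta_2|\,|v_t\,z(x,1,t)|\le\tfrac{|\delta_2|a}{2}v_t^2+\tfrac{|\delta_2|}{2a}z^2(x,1,t)$ with a free parameter $a>0$, and then use $|\delta_2|\le\beta_0\delta_1$, $1-\tau'\ge1-d$, $e^{-\lambda\tau}\ge e^{-\lambda\overline{\tau}}$ and $\xi=\overline{\xi}\delta_1$. This turns the coefficients of $\int_0^L v_t^2\,dx$ and of $\int_0^L v_t^2(x,t-\tau(t))\,dx$ into $\delta_1(t)\big(\tfrac{\overline{\xi}}{2}-1+\tfrac{\beta_0 a}{2}\big)$ and $\delta_1(t)\big(\tfrac{\beta_0}{2a}-\tfrac{\overline{\xi}(1-d)e^{-\lambda\overline{\tau}}}{2}\big)$ respectively. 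The main obstacle is to make both of these strictly negative simultaneously: this is precisely possible because $\beta_0<\sqrt{1-d}$ forces $\beta_0^2<\overline{\xi}(2-\overline{\xi})(1-d)$ for every $\overline{\xi}$ in the range \eqref{energyas1}, and then the smallness of $\lambda$ in \eqref{energyas2} preserves the strict inequality with the factor $e^{-\lambda\overline{\tau}}$, leaving a nonempty window $a\in\big(\tfrac{\beta_0}{\overline{\xi}(1-d)e^{-\lambda\overline{\tau}}},\tfrac{2-\overline{\xi}}{\beta_0}\big)$. Fixing such an $a$, both coefficients are negative multiples of $\delta_1(t)\ge\delta_0>0$ and hence bounded above by a fixed negative constant; together with the $J$-term this yields \eqref{energy'} with a uniform $C>0$.
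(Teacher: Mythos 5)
Your proposal is correct and follows essentially the same route as the paper: differentiate $E$, use \eqref{2.8}--\eqref{2.9} with the boundary conditions to cancel the stiffness terms, apply the Leibniz rule to the delay integral, estimate the cross term by Young's inequality, and invoke \eqref{as1}, \eqref{as2}, \eqref{energyas1}, \eqref{energyas2} to make all coefficients uniformly negative. The only cosmetic difference is that you keep the Young parameter $a$ free and verify the admissible window is nonempty, whereas the paper fixes $a=1/\sqrt{1-d}$ from the start (and is less explicit than you about the integration by parts and vanishing boundary terms).
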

\begin{proof}
	By differentiating \eqref{energy}, we get
	\begin{align*}
		E'(t)=&\rho\int_0^Lv_tv_{tt}dx+\mu\int_0^Lp_tp_{tt}+\alpha_1\int_0^Lv_xv_{xt}+\beta\int_0^L(\gamma v_x-p_x)(\gamma v_x-p_x)_tdx\\
		&+\dfrac{\xi(t)}{2}\int_0^Lv_t^2(x,t)dx-\dfrac{\xi(t)}{2}\int_0^Le^{-\lambda\tau(t)}v_t^2(x,t-\tau(t))(1-\tau'(t))dx\\
		&-\lambda\dfrac{\xi(t)}{2}\int_{t-\tau(t)}^t\int_0^Le^{\lambda(s-t)}v_t^2(x,s)dxds+\dfrac{\xi'(t)}{2}\int_{t-\tau(t)}^t\int_0^Le^{\lambda(s-t)}v_t^2(x,s)dxds.	
	\end{align*}
	And then, using \eqref{2.8} and \eqref{2.9}, we obtain
	\begin{align*}
		E'(t)=&-\delta_1\int_0^Lv_t^2(x,t)dx-\delta_2\int_0^Lv_t(x,t)v_t(x,t-\tau(t))dx\\
		&-\dfrac{\xi(t)}{2}\int_0^Le^{-\lambda\tau(t)}v_t^2(x,t-\tau(t))(1-\tau'(t))dx\\
		&+\dfrac{\xi(t)}{2}\int_0^Lv_t^2(x,t)dx-\lambda\dfrac{\xi(t)}{2}\int_{t-\tau(t)}^t\int_0^Le^{\lambda(s-t)}v_t^2(x,s)dxds\\
		&+\dfrac{\xi'(t)}{2}\int_{t-\tau(t)}^t\int_0^Le^{\lambda(s-t)}v_t^2(x,s)dxds.
	\end{align*}
	Finally, we use Cauchy-Schwarz's inequality to get
	\begin{align*}
		E'(t)&\leq-\delta_1(t)\int_0^Lv_t^2(x,t)dx-\delta_2\int_0^Lv_t(x,t)v_t(x,t-\tau(t))dx+\dfrac{\xi(t)}{2}\int_0^Lv_t^2(x,t)dx\\
		&\quad-\dfrac{\xi(t)}{2}(1-d)e^{(-\lambda\bar{\tau})}\int_0^Lv_t^2(x,t-\tau(t))dx-\lambda\dfrac{\xi(t)}{2}\int_{t-\tau(t)}^t\int_0^Le^{\lambda(s-t)}v_t^2(x,s)dxds\\
		&\quad+\dfrac{\xi'(t)}{2}\int_{t-\tau(t)}^t\int_0^Le^{\lambda(s-t)}v_t^2(x,s)dxds\\
		&\leq-\left(\delta_1(t)-\dfrac{|\delta_2(t)|}{2\sqrt{1-d}}-\dfrac{\xi(t)}{2}\right)\int_0^Lv_t^2(x,t)dx\\
		&\quad-\left(e^{-\lambda\bar{\tau}}\dfrac{\xi(t)}{2}(1-d)-\dfrac{|\delta_2(t)|}{2}\sqrt{1-d}\right)\int_0^Lv_t^2(x,t-\tau(t))dx\\
		&\quad-\left(\lambda\dfrac{\xi(t)}{2}-\dfrac{\xi'(t)}{2}\right)\int_{t-\tau(t)}^t\int_0^Le^{-\lambda(t-s)}v_t^2(x,s)dxds.		
	\end{align*}
For the first term on the right-hand side of the inequality, from \eqref{as1}, \eqref{as2} and \eqref{energyas1}, we find that
\begin{align*}
	-\left(\delta_1(t)-\dfrac{|\delta_2(t)|}{2\sqrt{1-d}}-\dfrac{\xi(t)}{2}\right)&=-\delta_1(t)\left(1-\dfrac{|\delta_2(t)|}{2\delta_1(t)\sqrt{1-d}}-\dfrac{\overline{\xi}}{2}\right)\\
	&\leq-\delta_1(t)\left(1-\dfrac{\beta_0}{2\sqrt{1-d}}-\dfrac{\overline{\xi}}{2}\right)\\
	&\leq-\delta_0\left(1-\dfrac{\beta_0}{2\sqrt{1-d}}-\dfrac{\overline{\xi}}{2}\right)\\
	&\leq-C_1.
\end{align*}
For the second term on the right-hand side of the inequality, from \eqref{as1}, \eqref{as2} and \eqref{energyas1}, we find that
\begin{align*}
	-\left(e^{-\lambda\bar{\tau}}\dfrac{\xi(t)}{2}(1-d)-\dfrac{|\delta_2(t)|}{2}\sqrt{1-d}\right)&=-\delta_1(t)(1-d)\left(e^{-\lambda\bar{\tau}}\dfrac{\overline{\xi}}{2}-\dfrac{|\delta_2(t)|}{2\delta_1(t)\sqrt{1-d}}\right)\\
	&\leq-\delta_1(t)(1-d)\left(e^{-\lambda\bar{\tau}}\dfrac{\overline{\xi}}{2}-\dfrac{\beta_0}{2\sqrt{1-d}}\right)\\
	&\leq-\delta_0(1-d)\left(e^{-\lambda\bar{\tau}}\dfrac{\overline{\xi}}{2}-\dfrac{\beta_0}{2\sqrt{1-d}}\right)\\
	&\leq-C_2.
\end{align*}
Besides, since $\xi(t)$ is a non-increasing function, we obtain
\begin{align*}
	-\left(\lambda\dfrac{\xi(t)}{2}-\dfrac{\xi'(t)}{2}\right)&\leq-\lambda\dfrac{\xi(t)}{2}=-\lambda\dfrac{\overline{\xi}\delta_1(t)}{2}\leq-\lambda\dfrac{\overline{\xi}\delta_0}{2}\leq-C_3.
	\end{align*}
By choosing $C=\min\{C_1,C_2,C_3\}$, we complete the proof.
\end{proof}

\section{Well-posedness}
In this section, we give the existence, uniqueness of solution to system \eqref{2.8}-\eqref{2.15}.

Let $U=(v,f,p,h,z)^T$, then system \eqref{2.8}-\eqref{2.15} can be rewritten as
\begin{equation}
	\begin{cases}\label{U'}
		U_t=\mathcal{A}(t)U, \\
		U(0)=U_0=(v_0,v_1,p_0,p_1,g_0(\cdot,-\rho\tau(0)))^T,
	\end{cases}
\end{equation}
where the operator $\mathcal{A}(t)$ is defined by
\begin{align}
	\mathcal{A}(t)\left(\begin{array}{c}
		v \\
		f \\
		p \\
		h \\
		z
	\end{array}\right)=
	\left(\begin{array}{c}
		f
		\\\dfrac{\alpha}{\rho}v_{xx}-\dfrac{\gamma\beta}{\rho}p_{xx}-\dfrac{\delta_1(t)}{\rho}f-\dfrac{\delta_2(t)}{\rho}z(\cdot,1,\cdot)
		\\h
		\\-\dfrac{\gamma\beta}{\mu}v_{xx}+\dfrac{\beta}{\mu}p_{xx}
		\\\dfrac{\tau'(t)\rho-1}{\tau(t)}z_{\rho}
	\end{array}
	\right),
\end{align}
with domain
\begin{align}\label{Domain}
	D(\mathcal{A}(t))=\{(v,f,p,h,z)^T\in H:f=z(\cdot,0)\in(0,L),v_x(L)=p_x(L)=0\},
\end{align}
for $t>0$, where
\begin{align*}
	H=\left((H^2(0,L)\cap H_{\ast}^1(0,L))\times H_{\ast}^1(0,L)\right)^2\times L^2((0,L);H^1(0,1)),
\end{align*}
where $H_{\ast}^1(0,L):=\{f\in H^1(0,L):f(0)=0\}$.

Observe that the domain of $\mathcal{A}(t)$ is independent of the time $t>0$, i.e.,
\begin{align}\label{condition9}
	D(\mathcal{A}(t))=D(\mathcal{A}(0)),\quad t>0.
\end{align}
Now, the energy space $\mathcal{H}$ is defined as
\begin{align}
	\mathcal{H}=\left(H_{\ast}^1(0,L)\times L^2(0,L)\right)^2\times L^2((0,L)\times(0,1)).
\end{align}
For $M=( v, f, p, h, z)^T$ and $N=(\tilde{v},\tilde{f},\tilde{p},\tilde{h},\tilde{z})^T\in\mathcal{H}$, we define the inner product on $\mathcal{H}$
\begin{align}\label{inner product}
	\langle M,N\rangle_{\mathcal{H}}:=& \rho\int_0^Lf\tilde{f}dx+\mu\int_0^Lh\tilde{h}dx+\alpha_{1}\int_0^Lv_x\tilde{v}_xdx\nonumber
	\\
	&+\beta\int_0^L(\gamma v_x+p_x)(\gamma\tilde{v}_x+\tilde{p}_x)dx+\xi(t)\tau(t)\int_0^L\int_0^1z(x,\rho)\tilde{z}(x,\rho)d\rho dx.
\end{align}

Now, we shall mention our main result:
\begin{theorem}
	Assume that \eqref{2.1}-\eqref{as3} hold. Then for any initial data $U_0\in\mathcal{H}$, there exists a unique solution $U\in C^0([0,\infty),\mathcal{H})$ of system  \eqref{2.8}-\eqref{2.15}. Moreover, if $U_0\in D(\mathcal{A}(0))$, then
	\begin{align*}
		U\in C([0,\infty),D(\mathcal{A}(0)))\cap C^1([0,\infty),\mathcal{H}).
	\end{align*}
\end{theorem}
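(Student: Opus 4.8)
The plan is to recast the well-posedness as an application of Kato's generation theorem for nonautonomous abstract evolution equations, implemented through the variable norm technique built into the time-dependent inner product \eqref{inner product}. Since the domain is time-independent by \eqref{condition9}, it suffices to verify three hypotheses: (a) a uniform equivalence of the family of $t$-dependent norms associated to \eqref{inner product}; (b) that for each fixed $t$ the operator $\mathcal{A}(t)$ generates a quasi-contraction semigroup on $(\mathcal{H},\langle\cdot,\cdot\rangle_t)$, i.e. $\mathcal{A}(t)-\kappa I$ is dissipative and $\mathcal{A}(t)$ is maximal; and (c) that $t\mapsto\mathcal{A}(t)$ is regular enough as a map into $B(D(\mathcal{A}(0)),\mathcal{H})$.

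First I would establish the norm equivalence. The only $t$-dependence in \eqref{inner product} sits in the delay term through the factor $\xi(t)\tau(t)$, so I would estimate $\frac{d}{dt}[\xi(t)\tau(t)]$ and deduce a bound of the form $\|U\|_t\le e^{\frac{c}{2}|t-s|}\|U\|_s$ for $t,s\in[0,T]$, with $c$ independent of $t$. This uses $0<\tau_0\le\tau\le\overline{\tau}$ from \eqref{2.1}, the bound on $\tau'$ coming from \eqref{2.3}, and $|\delta_1'/\delta_1|\le M_1$ together with $\xi=\overline{\xi}\,\delta_1$ from \eqref{as1}.

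Next I would verify the generation property. Computing $\langle\mathcal{A}(t)U,U\rangle_t$ and integrating by parts, using the boundary conditions in \eqref{Domain} together with the transport equation governing $z$, reproduces an energy identity parallel to Lemma \ref{Le4.1}; the assumptions $\beta_0<\sqrt{1-d}$ from \eqref{as2} and \eqref{energyas1} render the damping and delay contributions controllable, yielding $\langle\mathcal{A}(t)U,U\rangle_t\le\kappa\|U\|_t^2$ with $\kappa$ uniform in $t$. For maximality I would show that $\lambda I-\mathcal{A}(t)$ is surjective for some $\lambda>0$: given $F\in\mathcal{H}$, solving $(\lambda I-\mathcal{A}(t))U=F$ reduces, after eliminating $f$ and $h$ and integrating the transport equation to express $z$ through $f=v_t$, to a coupled elliptic system for $(v,p)$. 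I would put this in variational form on $H_{\ast}^1(0,L)\times H_{\ast}^1(0,L)$ and apply Lax--Milgram, then recover the $H^2$-regularity and the boundary conditions \eqref{2.11}-\eqref{2.12} to conclude $U\in D(\mathcal{A}(t))=D(\mathcal{A}(0))$.

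Finally, for hypothesis (c) I note that $\mathcal{A}(t)$ differs from $\mathcal{A}(0)$ only through the coefficients $\delta_1(t)$, $\delta_2(t)$ and $\frac{\tau'(t)\rho-1}{\tau(t)}$, whose $t$-derivatives are bounded by \eqref{as1}, \eqref{as3} and $\tau\in W^{2,\infty}$ from \eqref{2.3}; hence $\partial_t\mathcal{A}(t)$ is bounded and strongly continuous as a map into $B(D(\mathcal{A}(0)),\mathcal{H})$. Invoking Kato's theorem then produces the strong solution $U\in C([0,\infty),D(\mathcal{A}(0)))\cap C^1([0,\infty),\mathcal{H})$ for $U_0\in D(\mathcal{A}(0))$, and a density argument extends this to $U\in C^0([0,\infty),\mathcal{H})$ for arbitrary $U_0\in\mathcal{H}$. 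I expect the surjectivity step to be the main obstacle: the magnetic coupling term $\gamma\beta$ couples the $v$- and $p$-equations, and the mixed boundary conditions \eqref{2.11}-\eqref{2.12} must be matched carefully when establishing coercivity of the associated bilinear form.
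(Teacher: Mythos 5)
Your proposal follows essentially the same route as the paper's proof: Kato's variable-norm theorem with time-independent domain, the stability estimate $\|U\|_t\le e^{\frac{c}{2\tau_0}|t-s|}\|U\|_s$ extracted from the factor $\xi(t)\tau(t)$ in \eqref{inner product}, dissipativity of $\mathcal{A}(t)-\kappa(t)I$, maximality via Lax--Milgram for the reduced elliptic system in $(v,p)$ after eliminating $f$, $h$, $z$, and boundedness of $\partial_t\mathcal{A}(t)$ using \eqref{as1}, \eqref{as3} and $\tau\in W^{2,\infty}$. The only item the paper checks that you leave implicit is the density of $D(\mathcal{A}(0))$ in $\mathcal{H}$ (a hypothesis of Kato's theorem and what your final density argument rests on), and the obstacle you anticipate at the surjectivity step in fact dissolves, since $\alpha=\alpha_1+\gamma^2\beta$ makes the bilinear form coercive and the zeroth-order coefficient satisfies $\delta_1(t)+\delta_2(t)e^{\sigma(1,t)}\ge(1-\beta_0)\delta_1(t)>0$ because $\beta_0<\sqrt{1-d}\le 1$.
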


To prove Theorem 3.1, we use the variable norm technique, which is developed by Kato \cite{K1976}.
\begin{theorem}
	Assume that:\\
(\romannumeral1) $D(\mathcal{A}(0))$ is a dense subset of $\mathcal{H}$;\\
(\romannumeral2) $D(\mathcal{A}(t))=D(\mathcal{A}(0)),\quad\forall t>0$;\\
(\romannumeral3) for all $t\in[0,T]$, $\mathcal{A}(t)$ generates a strongly continuous semigroup on $\mathcal{H}$ and the family $\mathcal{A}=\{\mathcal{A}(t):t\in[0,T]\}$ is stable with stability constants C and $m$ independent of $t$, i.e., the semigroup $\left(S_t(s)\right)_{s\geq0}$ generated by $\mathcal{A}(t)$ satisfies
\begin{align*}
	||S_t(s)(u)||_{\mathcal{H}}\leq Ce^{ms}||u||_{\mathcal{H}},\quad\forall u\in\mathcal{H},\quad s\geq0;
\end{align*}
(\romannumeral4) $\partial_t\mathcal{A}(t)\in L_{\ast}^{\infty}([0,T],B(D(\mathcal{A}(0)),\mathcal{H}))$, where $L_{\ast}^{\infty}([0,T],B(D(\mathcal{A}(0)),\mathcal{H}))$ is the space of equivalent classes of essentially bounded, strongly measurable functions from $[0,T]$ into the set $B(D(\mathcal{A}(0)$\\$),\mathcal{H})$ of bounded operators from $D(\mathcal{A}(0))$ into $\mathcal{H}$.

Then problem \eqref{U'} has a unique solution
\begin{align*}
	U\in C([0,T),D(\mathcal{A}(0)))\cap C^1([0,T),\mathcal{H})
\end{align*}
for any initial datum in $D(\mathcal{A}(0))$.
\end{theorem}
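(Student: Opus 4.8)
The plan is to prove the statement exactly as Kato's abstract generation theorem for non-autonomous linear evolution equations of hyperbolic type \cite{K1976}: under hypotheses (i)--(iv) one constructs the two-parameter evolution family (propagator) $\{U(t,s)\}_{0\le s\le t\le T}$ associated with $\mathcal{A}=\{\mathcal{A}(t):t\in[0,T]\}$, and the asserted solution of \eqref{U'} is then $U(t)=U(t,0)U_0$. Thus the proof reduces to reproducing the four ingredients of Kato's construction, each driven by one of the hypotheses, and I would present it as a verification that hypotheses (i)--(iv) supply precisely what that construction requires.

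First I would reduce to a piecewise-autonomous approximation. Partition $[0,T]$ into $n$ equal subintervals with nodes $t_j^n=jT/n$, and set $\mathcal{A}_n(r)=\mathcal{A}(t_j^n)$ for $r\in[t_j^n,t_{j+1}^n)$. On each subinterval the frozen operator $\mathcal{A}(t_j^n)$ is time-independent, so by (iii) it generates a strongly continuous semigroup $S_{t_j^n}(\cdot)$; composing these semigroups across the partition produces an approximate propagator $U_n(t,s)$ on $\mathcal{H}$. Because the common domain $D(\mathcal{A}(0))$ is $t$-independent by (ii), each $U_n(t,s)$ leaves $D(\mathcal{A}(0))$ invariant, and applying the stability estimate of (iii) successively over the subintervals yields the uniform bound $\|U_n(t,s)\|_{B(\mathcal{H})}\le Ce^{m(t-s)}$ with $C,m$ independent of $n$ and of the partition. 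Density in (i) is what guarantees that each frozen $\mathcal{A}(t_j^n)$ is densely defined, so the semigroup generation in (iii) is legitimate.

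Second, and this is the crux, I would show that $\{U_n(t,s)\}$ is Cauchy and hence converges strongly to a limit $U(t,s)$. The key is the telescoping (Duhamel) identity, valid for $w\in D(\mathcal{A}(0))$,
\begin{align*}
U_n(t,s)w-U_m(t,s)w=\int_s^t U_n(t,r)\big(\mathcal{A}_n(r)-\mathcal{A}_m(r)\big)U_m(r,s)w\,dr .
\end{align*}
Here the uniform stability bound controls $U_n(t,r)$ on $\mathcal{H}$ and $U_m(r,s)$ on $D(\mathcal{A}(0))$, while hypothesis (iv), giving $\mathcal{A}(\cdot)\in L_{\ast}^{\infty}([0,T],B(D(\mathcal{A}(0)),\mathcal{H}))$ together with strong measurability, forces $\mathcal{A}_n(r)-\mathcal{A}_m(r)\to 0$ in $B(D(\mathcal{A}(0)),\mathcal{H})$ for a.e. $r$ as $n,m\to\infty$. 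Dominated convergence then sends the right-hand side to zero, so $U(t,s)$ exists, is strongly continuous, and inherits $\|U(t,s)\|\le Ce^{m(t-s)}$. The subtlety I expect to manage carefully is that the generators are only quasi-contractive with respect to the $t$-dependent inner product \eqref{inner product}: the stability constants in (iii) are naturally obtained in this variable norm, so one must check that the family of norms $\|\cdot\|_t$ is mutually equivalent uniformly in $t\in[0,T]$ (here the continuity of $\xi(t)\tau(t)$ on the compact interval is used) in order to transfer all estimates to the fixed norm of $\mathcal{H}$.

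Third, I would establish that $U(t,s)$ is a genuine propagator: $U(s,s)=I$, the composition law $U(t,r)U(r,s)=U(t,s)$, joint strong continuity, invariance of $D(\mathcal{A}(0))$, and the differentiation relations $\partial_t U(t,s)w=\mathcal{A}(t)U(t,s)w$ and $\partial_s U(t,s)w=-U(t,s)\mathcal{A}(s)w$ for $w\in D(\mathcal{A}(0))$. These follow by passing to the limit in the corresponding relations for $U_n$, again invoking (ii) and (iv). Setting $U(t)=U(t,0)U_0$ for $U_0\in D(\mathcal{A}(0))$ then gives a function in $C([0,T),D(\mathcal{A}(0)))\cap C^1([0,T),\mathcal{H})$ solving $U_t=\mathcal{A}(t)U$ with $U(0)=U_0$. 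Uniqueness is immediate: if $U,\tilde U$ both solve the problem, their difference $W$ satisfies $W_t=\mathcal{A}(t)W$, $W(0)=0$, and a Gronwall argument applied to $\tfrac{d}{dt}\|W(t)\|_t^2$ — using the variable-norm dissipativity encoded in (iii) and the uniform norm-equivalence — forces $W\equiv0$. As the entire scheme is exactly Kato's, I would present the argument as an application of \cite{K1976}, specialized to the hypotheses (i)--(iv) stated here.
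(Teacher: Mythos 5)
The first thing to say is that the paper contains no proof of this statement at all: Theorem 3.2 is quoted as a black box from Kato's lectures \cite{K1976} (the ``variable norm technique''), and the proof environment that follows it in the paper is the proof of Theorem 3.1, i.e.\ a verification that the concrete operator family of \eqref{U'} satisfies hypotheses (\romannumeral1)--(\romannumeral4). So your decision to reconstruct the abstract theorem is necessarily a different route from the paper's, and your outline is the correct classical one from the cited source: freeze the coefficients on a partition, compose the frozen semigroups into approximate propagators $U_n(t,s)$, pass to the limit via the telescoping Duhamel identity, verify the propagator relations and the differential equations, and get uniqueness by Gronwall. Your side remarks are also on target: (\romannumeral4) does make $t\mapsto\mathcal{A}(t)$ Lipschitz in $B(D(\mathcal{A}(0)),\mathcal{H})$, so $\mathcal{A}_n-\mathcal{A}_m\to0$ even uniformly, and the uniform equivalence of the time-dependent norms $\|\cdot\|_t$ on $[0,T]$ is exactly the point the paper handles with its estimate $\|\Phi\|_t/\|\Phi\|_s\leq e^{\frac{c}{2\tau_0}|t-s|}$.

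There is, however, one genuine gap at the crux of your second step. The Duhamel estimate needs $\|U_m(r,s)w\|_{D(\mathcal{A}(0))}$ (graph norm) bounded uniformly in $m$, $r$, $s$, and you claim this comes from ``the uniform stability bound'' of (\romannumeral3). It does not: (\romannumeral3) bounds only the $\mathcal{H}$-norm. Invariance of $D(\mathcal{A}(0))$ under each frozen semigroup is indeed automatic (a $C_0$-semigroup maps the domain of its generator into itself, and the domains coincide by (\romannumeral2)), but a set being invariant is much weaker than being uniformly bounded in its norm. The uniform graph-norm stability of the composed products is the technical heart of Kato's proof: one conjugates by the isomorphisms $S(t)=\lambda I-\mathcal{A}(t):D(\mathcal{A}(0))\to\mathcal{H}$ and shows the conjugated family is again stable in $\mathcal{H}$, with constants kept uniform over all partitions precisely because (\romannumeral4) gives Lipschitz control of $t\mapsto\mathcal{A}(t)$ in $B(D(\mathcal{A}(0)),\mathcal{H})$. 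Without this lemma your telescoping estimate does not close, and the same graph-norm bounds are needed a second time to pass to the limit in $\partial_tU_n(t,s)w=\mathcal{A}_n(t)U_n(t,s)w$ and obtain the asserted $C([0,T),D(\mathcal{A}(0)))\cap C^1([0,T),\mathcal{H})$ regularity. If you supply this $Y$-stability lemma (or cite it explicitly from \cite{K1976}), the rest of your sketch, including the Gronwall uniqueness argument in the variable norm, goes through.
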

\begin{proof3.1}
	Our aim is to verify the above assumptions for problem \eqref{U'}. First, we show that $D(\mathcal{A}(0))$ is dense in $\mathcal{H}$. Let $N=(\tilde{v},\tilde{f},\tilde{p},\tilde{h},\tilde{z})^T\in\mathcal{H}$ be orthogonal to all elements of $D(\mathcal{A}(0))$, namely
	\begin{align}\label{inner1}
		0=\langle M,N\rangle_{\mathcal{H}}=& \rho\int_0^Lf\tilde{f}dx+\mu\int_0^Lh\tilde{h}dx+\alpha_{1}\int_0^Lv_x\tilde{v}_xdx\nonumber
		\\
		&+\beta\int_0^L(\gamma v_x-p_x)(\gamma\tilde{v}_x-\tilde{p}_x)dx+\xi(t)\tau(t)\int_0^L\int_0^1z(x,\rho)\tilde{z}(x,\rho)d\rho dx,
	\end{align}
for $M=(v, f, p, h, z)^T\in D(\mathcal{A}(0))$.

We first take $v=f=p=h=0$ and $z\in\mathcal{D}((0,L)\times(0,1))$. As $M=(0, 0, 0, 0, z)^T\in D(\mathcal{A}(0))$ and therefore, from \eqref{inner1}, we obtain that
\begin{align*}
	\int_0^L\int_0^1z\tilde{z}d\rho dx=0.
\end{align*}
Since $\mathcal{D}((0,L)\times(0,1))$ is dense in $L^2((0,L)\times(0,1))$, then it follows that $\tilde{z}=0$. Next, let $f\in\mathcal{D}(0,L)$, then $M=(0, f, 0, 0, 0)^T\in D(\mathcal{A}(0))$, which implies from \eqref{inner1} that
\begin{align*}
	\int_0^Lf\tilde{f}dx=0.
\end{align*}
So as above, it follows that $\tilde{f}=0$.

Next, let $M=(v, 0, 0, 0, 0)^T$ then we obtain that
\begin{align*}
	\int_0^Lv_x\tilde{v}_xdx=0.
\end{align*}
It is obvious that $(v, 0, 0, 0, 0)^T\in D(\mathcal{A}(0))$ if and only if $v\in H^2(0,L)\cap H_{\ast}^1(0,L)$. Since $H^2(0,L)\cap H_{\ast}^1(0,L)$ is dense in $H_{\ast}^1(0,L)$ wiht respect to the inner product $\langle\cdot,\cdot\rangle_{H_{\ast}^1(0,L)}$, we get $\tilde{v}=0$. By the same ideas as above, we can also show that $\tilde{p}=0$. Finally for $h\in\mathcal{D}(0,L)$, we obtain from \eqref{inner1} that
\begin{align*}
	\int_0^Lh\tilde{h}dx=0.
\end{align*}
By density of $\mathcal{D}(0,L)$ in $L^2(0,L)$, we obtain that $\tilde{h}=0$.

We consequently obtain
\begin{align}\label{condition1}
	D(\mathcal{A}(0))\quad\text{is dense in}\quad\mathcal{H}.
\end{align}
Next, we show that the operator $\mathcal{A}(t)$ generates a $C_0$-semigroup in $\mathcal{H}$ for a fixed $t$.

We calculate $\langle\mathcal{A}(t)U,U\rangle_t$ for a fixed $t$. Take $U=(v, f, p, h, z)^T\in D(\mathcal{A}(t))$. Then
\begin{align*}
	\langle\mathcal{A}(t)U,U\rangle_t		&\leq-\delta_1(t)\int_0^Lf^2dx-\delta_2(t)\int_0^Lz(x,1)fdx\\
	&\quad-\dfrac{\xi(t)}{2}\int_0^L\int_0^1(1-\tau'(t)\rho)\dfrac{\partial}{\partial\rho}z^2(x,\rho)d\rho dx.
\end{align*}
Since
\begin{align*}
	(1-\tau'(t)\rho)\dfrac{\partial}{\partial\rho}z^2(x,\rho)=\dfrac{\partial}{\partial\rho}((1-\tau'(t)\rho)z^2(x,\rho))+\tau'(t)z^2(x,\rho),
\end{align*}
we have
\begin{align*}
	\int_0^1(1-\tau'(t)\rho)\dfrac{\partial}{\partial\rho}z^2(x,\rho)d\rho=(1-\tau'(t))z^2(x,1)-z^2(x,0)+\tau'(t)\int_0^1z^2(x,\rho)d\rho.
\end{align*}
Whereupon
\begin{align*}
	\langle\mathcal{A}(t)U,U\rangle_t		&=-\delta_1(t)\int_0^Lf^2dx-\delta_2(t)\int_0^Lz(x,1)fdx\\
	&\quad+\dfrac{\xi(t)}{2}\int_{0}^{L}f^2dx-\dfrac{\xi(t)(1-\tau'(t))}{2}\int_0^Lz^2(x,1)dx -\dfrac{\xi(t)\tau'(t)}{2}\int_0^L\int_0^1z^2(x,\rho)d\rho dx.
\end{align*}
By Young's inequality, we obtain
\begin{align*}
	\langle\mathcal{A}(t)U,U\rangle_t
	&\leq-\left(\delta_1(t)-\dfrac{|\delta_2(t)|}{2\sqrt{1-d}}-\dfrac{\xi(t)}{2}\right)\int_0^Lf^2dx\\
	&\quad-\left(\dfrac{\xi(t)}{2}-\dfrac{\xi(t)\tau'(t)}{2}-\dfrac{|\delta_2(t)|\sqrt{1-d}}{2}\right)\int_0^Lz^2(x,1)dx\\
	&\quad+\dfrac{\xi(t)|\tau'(t)|}{2\tau(t)}\tau(t)\int_0^L\int_0^1z^2(x,\rho)d\rho dx.
\end{align*}
Then, from \eqref{as1}, \eqref{as2} and $\xi(t)=\overline{\xi}\delta_1(t)$, we obtain that
\begin{align*}
	\langle\mathcal{A}(t)U,U\rangle_t
	&\leq-\delta_1(t)\left(1-\dfrac{\beta_0}{2\sqrt{1-d}}-\dfrac{\overline{\xi}}{2}\right)\int_0^Lf^2dx\\
	&\quad-\delta_1(t)\left(\dfrac{\overline{\xi}(1-\tau'(t))}{2}-\dfrac{\beta_0\sqrt{1-d}}{2}\right)\int_0^Lz^2(x,1)dx+\kappa(t)\langle U,U\rangle_t,
\end{align*}
where
$
	\kappa(t)=\dfrac{\sqrt{1+\tau'(t)^2}}{2\tau(t)}.
$
From \eqref{as1}, \eqref{as2} and \eqref{energyas1}, we obtain that
\begin{align*}
	1-\dfrac{\beta_0}{2\sqrt{1-d}}-\dfrac{\overline{\xi}}{2}>0\quad and\quad\dfrac{\overline{\xi}(1-\tau'(t))}{2}-\dfrac{\beta_0\sqrt{1-d}}{2}>0.
\end{align*}
Therefore we conclude that
\begin{align}\label{condition2}
	\langle\mathcal{A}(t)U,U\rangle_t-\kappa(t)\langle U,U\rangle_t\leq 0,
\end{align}
which means that operator $\tilde{\mathcal{A}}(t)=\mathcal{A}(t)-\kappa(t)I$ is dissipative.

Now, we prove the surjectivity of the operator $I-\mathcal{A}(t)$ for fixed $t>0$. For $F=(f_1,f_2,f_3,f_4$\\$,f_5)^T\in\mathcal{H}$, we seek $U=(v, f, p, h, z)^T\in D(\mathcal{A}(t))$ solution of the following equations:
\begin{equation}
	\begin{cases}\label{equation1}
		v-f=f_1, \\
		\rho f-\alpha v_{xx}+\gamma\beta p_{xx}+\delta_{1}(t)f+\delta_{2}(t)z(x,1)=\rho f_2,\\
		p-h=f_3,\\
		\mu h-\beta p_{xx}+\gamma\beta v_{xx}=\mu f_4,\\
		\tau(t)z+(1-\tau'(t)\rho)z_\rho=\tau(t)f_5.
	\end{cases}
\end{equation}
Suppose that we have found that $v$ and $p$ with the appropriated regularity. Therefore, the first and third equations in \eqref{equation1} give
\begin{equation}\label{equation2}
	\begin{cases}
		f=v-f_1,\\
		h=p-f_3.
	\end{cases}
\end{equation}
It is clear that that $f,h\in H_{\ast}^1(0,L)$. Furthmore, by \eqref{Domain} we can find $z$ as
\begin{align*}
	z(x,0)=f,\quad \text{for}\quad x\in(0,L).
\end{align*}
Following the same approach as in \cite{N2011}, we obtain, by using the last equation in \eqref{equation1},
\begin{align*}
	z(x,\rho)=f(x)e^{-\vartheta(\rho,t)}+\tau(t)e^{-\vartheta(\rho,t)}\int_0^\rho f_5(x,s)e^{\vartheta(s,t)}ds,
\end{align*}
if $\tau'(t)=0$, where $\vartheta(\iota,t)=\iota\tau(t)$, and
\begin{align*}
	z(x,\rho)=f(x)e^{\sigma(\rho,t)}+e^{\sigma(\rho,t)}\int_0^\rho\dfrac{\tau(t)f_5(x,s)}{1-s\tau'(s)}e^{-\sigma(s,t)}ds,
\end{align*}
otherwise, where $\sigma(\iota,t)=\dfrac{\tau(t)}{\tau'(t)}\ln(1-\iota\tau'(t))$.

From  \eqref{equation2}, we obtain
\begin{align}\label{equation3}
	z(x,\rho)=v(x)e^{-\vartheta(\rho,t)}-f_1(x,\rho)e^{-\vartheta(\rho,t)}+\tau(t)e^{-\vartheta(\rho,t)}\int_0^\rho f_5(x,s)e^{\vartheta(s,t)}ds,
\end{align}
if $\tau'(t)=0$, and
\begin{align}\label{equation4}
	z(x,\rho)=v(x)e^{\sigma(\rho,t)}-f_1(x,\rho)e^{\sigma(\rho,t)}+e^{\sigma(\rho,t)}\int_0^\rho\dfrac{\tau(t)f_5(x,s)}{1-s\tau'(s)}e^{-\sigma(s,t)}ds,
\end{align}
otherwise. When $\rho=1$, we have
\begin{align}
	z(x,1)=v(x)N_1+N_2,
\end{align}
where
\begin{equation*}
	N_1=\begin{cases}
		e^{-\vartheta(1,t)},\quad\text{if}\quad\tau'(t)=0,\\
		e^{\sigma(1,t)},\quad\ \ \text{if}\quad\tau'(t)\neq 0,
	\end{cases}
\end{equation*}
and
\begin{equation*}
	N_2=\begin{cases}
		-f_1(x,1)e^{-\vartheta(1,t)}+\tau(t)e^{-\vartheta(\rho,t)}\int_0^1 f_5(x,s)e^{\vartheta(s,t)}ds,\quad\text{if}\quad\tau'(t)=0,\\
		-f_1(x,1)e^{\sigma(1,t)}+e^{\sigma(1,t)}\int_0^1\dfrac{\tau(t)f_5(x,s)}{1-s\tau'(s)}e^{-\sigma(s,t)}ds,\quad\ \text{if}\quad\tau'(t)\neq 0.
	\end{cases}
\end{equation*}
It is clear from the above formula that $N_2$ depends only on $f_1$ and $f_5$.

Now, we have to find $v$, $p$ as solution of the equations
\begin{equation}
	\begin{cases}\label{equation5}
		\rho v-\alpha v_{xx}+\gamma\beta p_{xx}+\delta_{1}(t)f+\delta_{2}(t)z(x,1)=\rho f_1+\rho f_2,\\
		\mu p-\beta p_{xx}+\gamma\beta v_{xx}=\mu f_3+\mu f_4.
	\end{cases}
\end{equation}
Solving system \eqref{equation5} is equivalent to finding $(v,p)\in(H^2(0,L)\cap H_{\ast}^1(0,L))\times (H^2(0,L)\cap H_{\ast}^1(0,L))$ such that
\begin{equation}
	\begin{cases}\label{equation6}
		\int_0^L\left(\rho vw+\alpha v_{x}w_x-\gamma\beta p_{x}w_x+\delta_{1}(t)fw+\delta_{2}(t)z(x,1)w\right)dx=\int_0^L\rho(f_1+f_2)wdx,\\
		\int_0^L\left(\mu p\chi+\beta p_{x}\chi_x-\gamma\beta v_{x}\chi_x\right)dx=\int_0^L\mu( f_3+f_4)\chi dx,
	\end{cases}
\end{equation}
for all $(w,\chi)\in H_{\ast}^1(0,L)\times H_{\ast}^1(0,L)$.

Consequently, problem \eqref{equation6} is equivalent to the problem
\begin{align}\label{equation7}
	\zeta((v,p),(w,\chi))=l(w,\chi),
\end{align}
where the bilinear $\zeta:\left[H_{\ast}^1(0,L)\times H_{\ast}^1(0,L)\right]^2\to\mathbb{R}$ and the linear form $l:H_{\ast}^1(0,L)\times H_{\ast}^1(0,L)\to\mathbb{R}$ are defined by
\begin{align*}
	\zeta((v,p),(w,\chi))=&\int_0^L\left(\rho vw+\alpha v_{x}w_x-\gamma\beta p_{x}w_x\right)dx+\int_0^L\left(\mu p\chi+\beta p_{x}\chi_x-\gamma\beta v_{x}\chi_x\right)dx\\
	&+\int_0^L\left(\delta_1(t)+\delta_2(t)e^{-\vartheta(1,t)}\right)vwdx,
\end{align*}
and
\begin{align*}
	l(w,\chi)=\int_0^L\rho(f_1+f_2)wdx+\int_0^L\mu( f_3+f_4)\chi dx+\int_0^L\left(\delta_1(t)f_1w-\delta_2(t)N_2w\right)dx,
\end{align*}
if $\tau'(t)=0$. And if $\tau'(t)\neq 0$, we define
\begin{align*}
	\zeta((v,p),(w,\chi))=&\int_0^L\left(\rho vw+\alpha v_{x}w_x-\gamma\beta p_{x}w_x\right)dx+\int_0^L\left(\mu p\chi+\beta p_{x}\chi_x-\gamma\beta v_{x}\chi_x\right)dx\\
	&+\int_0^L\left(\delta_1(t)+\delta_2(t)e^{\sigma(1,t)}\right)vwdx.
\end{align*}
It is easy to verify that $\zeta$ is continuous and coercive, and $l$ is continuous. So applying the Lax-Milgram theorem, problem \eqref{equation7} admits a unique solution $(v,p)\in H_{\ast}^1(0,L)\times H_{\ast}^1(0,L)$ for all $(w,\chi)\in H_{\ast}^1(0,L)\times H_{\ast}^1(0,L)$. Applying the classical elliptic regularity, it follows from \eqref{equation6} that $(v,p)\in H^2(0,L)\times H^2(0,L)$.

Therefore, the operator $I-\mathcal{A}(t)$ is surjective for all $t>0$. Again as $\kappa(t)>0$, we prove that
\begin{align}\label{condition3}
	I-\tilde{\mathcal{A}}(t)=(1+\kappa(t))I-\mathcal{A}(t)\quad\text{is surjective }
\end{align}
for all $t>0$.

To complete the proof of (\romannumeral3), it suffices to show that
\begin{align}\label{equation8}
	\dfrac{||\Phi||_t}{||\Phi||_s}\leq e^{\dfrac{c}{2\tau_0}|t-s|},\quad t,s\in[0,T],
\end{align}
where $\Phi=(v, f, p, h, z)^T$, $c$ is a positive constant and $||\cdot||$ is the norm associated the inner product \eqref{inner product}. For all $t,s\in[0,T]$, we obtain
\begin{align*}
	|\Phi||_t^2-||\Phi||_s^2e^{\dfrac{c}{\tau_0}|t-s|}=&\left(1-e^{\dfrac{c}{\tau_0}|t-s|}\right)\left[\rho\int_0^L|f|^2dx+\mu\int_0^L|h|^2dx\right.\nonumber\\
	&\left.+\alpha_{1}\int_0^L|v_x|^2dx+\beta\int_0^L|\gamma v_x-p_x|^2dx\right]\nonumber\\
	&+\left(\xi(t)\tau(t)-\xi(s)\tau(s)e^{\dfrac{c}{\tau_0}|t-s|}\right)\int_0^L\int_0^1|z(x,\rho)|^2d\rho dx.
\end{align*}
It is clear that $1-e^{\dfrac{c}{\tau_0}|t-s|}\leq 0$. Now we prove $\xi(t)\tau(t)-\xi(s)\tau(s)e^{\dfrac{c}{\tau_0}|t-s|}\leq 0$ for some $c>0$. We first observe that
\begin{align*}
	\tau(t)=\tau(s)+\tau'(a)(t-s),
\end{align*}
for some $a\in[s,t]$. Since $\xi(t)$ is a non-increasing function and $\xi(t)>0$, we obtain that
\begin{align*}
	\xi(t)\tau(t)\leq\xi(s)\tau(s)+\xi(s)\tau'(a)(t-s),
\end{align*}
which implies
\begin{align*}
	\dfrac{\xi(t)\tau(t)}{\xi(s)\tau(s)}\leq 1+\dfrac{|\tau'(a)|}{\tau(s)}|t-s|.
\end{align*}
From \eqref{2.1} and \eqref{2.3}, we deduce that
\begin{align*}
		\dfrac{\xi(t)\tau(t)}{\xi(s)\tau(s)}\leq 1+\dfrac{c}{\tau_0}|t-s|\leq e^{\dfrac{c}{\tau_0}|t-s|},
\end{align*}
which proves \eqref{equation8} and therefore (\romannumeral3) follows.

Moreover, as $\kappa'(t)=\dfrac{\tau'(t)\tau''(t)}{2\tau(t)\sqrt{1+\tau'(t)^2}}-\dfrac{\tau'(t)\sqrt{1+\tau'(t)^2}}{2\tau(t)^2}$ is bounded on $[0,T]$ for all $T>0$ (by \eqref{2.3}) we obtain
\begin{align*}
	\dfrac{d}{dt}\mathcal{A}(t)U=
	\left(\begin{array}{c}
		0
		\\-\dfrac{\delta_1'(t)}{\rho}f-\dfrac{\delta_2'(t)}{\rho}z(\cdot,1,\cdot)
		\\0
		\\0
		\\\dfrac{\tau''(t)\tau(t)\rho-\tau(t)(\tau'(t)\rho-1)}{\tau(t)^2}z_{\rho}
	\end{array}
	\right),
\end{align*}
and
\begin{align}\label{condition4}
	\partial_t\tilde{\mathcal{A}}(t)\in L_{\ast}^{\infty}([0,T],B(D(\mathcal{A}(0)),\mathcal{H})),
\end{align}
where $L_{\ast}^{\infty}([0,T],B(D(\mathcal{A}(0)),\mathcal{H}))$ is the space of equivalent classes of essentially bounded, strongly measurable functions from $[0,T]$ into the set $B(D(\mathcal{A}(0)),\mathcal{H})$.

Consequently, from the above analysis, we deduce that the problem
\begin{equation}
	\begin{cases}\label{UU'}
		\tilde{U}_t=\tilde{\mathcal{A}}(t)\tilde{U}, \\
		\tilde{U}(0)=U_0=(v_0,v_1,p_0,p_1,g_0(\cdot,-\rho\tau(0))^T
	\end{cases}
\end{equation}
has a unique solution $\tilde{U}\in C([0,\infty),D(\mathcal{A}(0)))\cap C^1([0,\infty),\mathcal{H})$ for $U_0\in D(\mathcal{A}(0))$. The requested solution of \eqref{U'} is then given by
\begin{align*}
	U(t)=e^{\int_0^t\kappa(s)ds}\tilde{U}(t)
\end{align*}
because
\begin{align*}
	U_t(t)&=\kappa(t)e^{\int_0^t\kappa(s)ds}\tilde{U}(t)+e^{\int_0^t\kappa(s)ds}\tilde{U}_t(t)\\
	&=e^{\int_0^t\kappa(s)ds}\left(\kappa(t)+\tilde{\mathcal{A}}(t)\right)\tilde{U}(t)\\
	&=\mathcal{A}(t)e^{\int_0^t\kappa(s)ds}\tilde{U}(t)\\
	&=\mathcal{A}(t)U(t)
\end{align*}
which concludes the proof.
\end{proof3.1}

\section{Exponential Stability}
In this section, we provide our stability result by constructing the perturbed Lyapunov functional.
\begin{lemma}\label{Le4.2}
	Let $($v,p$)$ be the solution of system \eqref{2.8}-\eqref{2.15}. Then the functional
	\begin{align}\label{k1}
		K_1(t)=\int_0^L\rho v_tvdx+\int_0^L\gamma\mu p_tvdx
	\end{align}
satisfies the following estimate:
\begin{align}\label{5.2}
	K_1'(t)\leq&(\rho+\varepsilon\gamma\mu+\varepsilon_1\delta_1(t))\int_0^L|v_t|^2dx+|\delta_2(t)|\varepsilon_2\int_0^L|v_t(x,t-\tau(t))|^2dx\nonumber\\
	&+\dfrac{\gamma\mu}{4\varepsilon}\int_0^L|p_t|^2dx-(\alpha_1-c_1(t))\int_0^L|v_x|^2dx,
\end{align}
where $c_1(t)=\dfrac{c'\delta_1(t)}{4\varepsilon_1}+\dfrac{c'|\delta_2(t)|}{4\varepsilon_2}$.
\end{lemma}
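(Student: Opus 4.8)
The plan is to differentiate $K_1$ directly, substitute the evolution equations to eliminate the second time derivatives, exploit the algebraic cancellation built into the choice of multiplier, and then close the estimate with Young's and Poincar\'e's inequalities. First I would compute
\begin{align*}
K_1'(t)=\rho\int_0^L v_{tt}v\,dx+\rho\int_0^L v_t^2\,dx+\gamma\mu\int_0^L p_{tt}v\,dx+\gamma\mu\int_0^L p_t v_t\,dx.
\end{align*}
From \eqref{2.8} I would insert $\rho v_{tt}=\alpha v_{xx}-\gamma\beta p_{xx}-\delta_1(t)v_t-\delta_2(t)z(x,1,t)$, and from \eqref{2.9} I would insert $\mu p_{tt}=\beta p_{xx}-\gamma\beta v_{xx}$, so that $\gamma\mu p_{tt}v=\gamma\beta p_{xx}v-\gamma^2\beta v_{xx}v$.

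The decisive observation is that the two terms $-\gamma\beta p_{xx}v$ (coming from $\rho v_{tt}v$) and $+\gamma\beta p_{xx}v$ (coming from $\gamma\mu p_{tt}v$) cancel exactly; this is precisely why the weight $\gamma\mu$ was placed on the $p_t v$ term in the definition of $K_1$. What survives of the elastic part is $(\alpha-\gamma^2\beta)\int_0^L v_{xx}v\,dx=\alpha_1\int_0^L v_{xx}v\,dx$, using the constitutive relation $\alpha=\alpha_1+\gamma^2\beta$. Integrating by parts gives $\alpha_1\int_0^L v_{xx}v\,dx=\alpha_1[v_xv]_0^L-\alpha_1\int_0^L v_x^2\,dx$, and the boundary bracket vanishes: the boundary conditions \eqref{2.11}--\eqref{2.12} combine to force $v_x(L)=p_x(L)=0$ (solving $\alpha v_x(L)=\gamma\beta p_x(L)$ together with $p_x(L)=\gamma v_x(L)$ yields $\alpha_1 v_x(L)=0$), while $v(0)=0$ kills the lower endpoint. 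Hence at this stage
\begin{align*}
K_1'(t)=-\alpha_1\int_0^L v_x^2\,dx+\rho\int_0^L v_t^2\,dx+\gamma\mu\int_0^L p_t v_t\,dx-\delta_1(t)\int_0^L v_t v\,dx-\delta_2(t)\int_0^L z(x,1,t)v\,dx.
\end{align*}

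It remains to bound the last three terms. I would apply Young's inequality with parameters $\varepsilon,\varepsilon_1,\varepsilon_2$ to get $\gamma\mu\int p_t v_t\le \varepsilon\gamma\mu\int v_t^2+\tfrac{\gamma\mu}{4\varepsilon}\int p_t^2$, then $-\delta_1(t)\int v_tv\le \varepsilon_1\delta_1(t)\int v_t^2+\tfrac{\delta_1(t)}{4\varepsilon_1}\int v^2$, and finally, recalling $z(x,1,t)=v_t(x,t-\tau(t))$, the delayed term $-\delta_2(t)\int z(x,1,t)v\le |\delta_2(t)|\varepsilon_2\int v_t^2(x,t-\tau(t))+\tfrac{|\delta_2(t)|}{4\varepsilon_2}\int v^2$. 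The step I expect to be the real mechanism of the proof is absorbing the two residual $\int_0^L v^2\,dx$ terms into the good negative term $-\alpha_1\int_0^L v_x^2\,dx$: since $v(0,t)=0$, Poincar\'e's inequality gives $\int_0^L v^2\,dx\le c'\int_0^L v_x^2\,dx$, so that their combined contribution becomes $c_1(t)\int_0^L v_x^2\,dx$ with $c_1(t)=\tfrac{c'\delta_1(t)}{4\varepsilon_1}+\tfrac{c'|\delta_2(t)|}{4\varepsilon_2}$ exactly as stated. Collecting the coefficients of $\int v_t^2$, $\int v_t^2(\cdot,t-\tau(t))$, $\int p_t^2$ and $\int v_x^2$ then yields \eqref{5.2}. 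No genuine obstacle arises beyond bookkeeping; the only point requiring care is verifying the boundary cancellation and retaining the time-dependence of $\delta_1,\delta_2$ throughout so that the coefficients in \eqref{5.2} come out with the precise $t$-dependence claimed.
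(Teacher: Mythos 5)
Your proof is correct and takes essentially the same route as the paper's: both arguments reduce $K_1'$ to the identity $K_1'(t)=\rho\int_0^L v_t^2\,dx+\gamma\mu\int_0^L p_tv_t\,dx-\alpha_1\int_0^L v_x^2\,dx-\int_0^L\bigl(\delta_1(t)v_tv+\delta_2(t)vv_t(x,t-\tau(t))\bigr)dx$ by exploiting the split $\alpha=\alpha_1+\gamma^2\beta$ and equation \eqref{1.2} to cancel the $\gamma\beta p_{xx}v$ cross terms, and then close with Young's and Poincar\'e's inequalities using the same parameters $\varepsilon,\varepsilon_1,\varepsilon_2$ and constant $c'$. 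If anything, you are more explicit than the paper on two points it leaves implicit, namely the vanishing of the boundary bracket via $v_x(L)=p_x(L)=0$ and the Poincar\'e absorption of $\int_0^L v^2\,dx$ into $-\alpha_1\int_0^L v_x^2\,dx$.
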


	\begin{proof}
	By multiplying \eqref{1.1} by $v$, and integrating by parts on (0,$L$), we get
	\begin{align*}
		&\rho\int_0^Lv_{tt}vdx-\alpha_1\int_0^Lv_{xx}vdx-\gamma^2\beta\int_0^Lv_{xx}vdx+\gamma\beta\int_0^Lp_{xx}vdx  \\
		&+\delta_1\int_0^Lv_tvdx+\delta_2\int_0^Lvv_t(x,t-\tau(t))dx=0,
	\end{align*}
and then
\begin{align*}
	\dfrac{d}{dt}\left(\rho\int_0^Lv_tvdx+\gamma\mu\int_0^Lp_tvdx\right)=&\rho\int_0^L|v_t|^2dx+\gamma\mu\int_0^Lp_tv_tdx-\alpha_1\int_0^L|v_x|^2dx\\
	&-\int_0^L\left( \delta_1(t)v_tv+\delta_2(t)vv_t(x,t-\tau(t))\right)dx.
\end{align*}
Using Young's inequality, we obtain
\begin{align*}
	K_1'(t)&\leq(\rho+\varepsilon\gamma\mu)\int_0^L|v_t|^2dx+\dfrac{\gamma\mu}{4\varepsilon}\int_0^L|p_t|^2dx\\
	&\quad-\alpha_1(t)\int_0^L|v_x|^2dx-\int_0^L(\delta_1(t)v_tv+\delta_2(t)vv_t(x,t-\tau(t)))dx\\
	&\leq(\rho+\varepsilon\gamma\mu+\varepsilon_1\delta_1(t))\int_0^L|v_t|^2dx+\varepsilon_2|\delta_2(t)|\int_0^L|v_t(x,t-\tau(t))|^2dx\\
	&\quad+\dfrac{\gamma\mu}{4\varepsilon}\int_0^L|p_t|^2dx-\left(\alpha_1-\dfrac{c'\delta_1(t)}{4\varepsilon_1}-\dfrac{c'|\delta_2(t)|}{4\varepsilon_2}\right)\int_0^L|v_x|^2dx.
\end{align*}
The proof is completed.
	\end{proof}
\begin{lemma}\label{Le4.3}
	Let $($v,p$)$ be the solution of system \eqref{2.8}-\eqref{2.15}. Then the functional
	\begin{align}\label{k2}
		K_2(t)=\int_0^L\rho v_t(\gamma v-p)dx+\int_0^L\gamma\mu p_t(\gamma v-p)dx
	\end{align}
	satisfies the following estimate:
	\begin{align}\label{5.3}
		K_2'(t)\leq&\left(\rho\gamma+\dfrac{\rho}{4\eta_1}+\dfrac{\gamma^2\mu}{4\eta_2}+\dfrac{\delta_1(t)}{4\eta_3}\right)\int_0^L|v_t|^2dx+\dfrac{|\delta_2(t)|}{4\eta_4}\int_0^L|v_t(x,t-\tau(t))|^2dx\nonumber\\
		&-\dfrac{\gamma\mu}{2}\int_0^L|p_t|^2dx+\dfrac{\alpha_1}{4\eta_5}\int_0^L|v_x|^2dx+(\alpha_1\eta_5+c'\delta_1(t)\eta_3+c'|\delta_2(t)|\eta_4)\int_0^L|\gamma v_x-p_x|^2dx.
	\end{align}
\end{lemma}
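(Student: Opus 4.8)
The plan is to run the standard multiplier argument exactly as in Lemma \ref{Le4.2}, but now with the coupled multiplier $\gamma v-p$ replacing $v$. First I would differentiate \eqref{k2} directly in time. This produces four groups of terms: the two second-order-in-time pieces $\int_0^L\rho v_{tt}(\gamma v-p)dx$ and $\int_0^L\gamma\mu p_{tt}(\gamma v-p)dx$, together with the two first-order pieces $\int_0^L\rho v_t(\gamma v_t-p_t)dx$ and $\int_0^L\gamma\mu p_t(\gamma v_t-p_t)dx$.

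The decisive step is to add the two second-order pieces after substituting the evolution equations \eqref{2.8} and \eqref{2.9}. Writing $\rho v_{tt}=\alpha v_{xx}-\gamma\beta p_{xx}-\delta_1(t)v_t-\delta_2(t)v_t(x,t-\tau(t))$ and $\gamma\mu p_{tt}=\gamma\beta p_{xx}-\gamma^2\beta v_{xx}$, and using the structural relation $\alpha=\alpha_1+\gamma^2\beta$, the $\gamma^2\beta v_{xx}$ and $\gamma\beta p_{xx}$ contributions cancel, collapsing the sum to $\rho v_{tt}+\gamma\mu p_{tt}=\alpha_1 v_{xx}-\delta_1(t)v_t-\delta_2(t)v_t(x,t-\tau(t))$. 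I expect this cancellation, and the one it triggers in the boundary terms, to be the main obstacle of the computation.

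I would then integrate $\int_0^L\alpha_1 v_{xx}(\gamma v-p)dx$ by parts. At $x=0$ the boundary term vanishes because $v(0)=p(0)=0$ forces $(\gamma v-p)(0)=0$; at $x=L$ it vanishes because the two boundary conditions \eqref{2.11}--\eqref{2.12} combine (via $p_x(L)=\gamma v_x(L)$ and $\alpha=\alpha_1+\gamma^2\beta$) to give $\alpha_1 v_x(L)=0$, hence $v_x(L)=0$. What survives is $-\alpha_1\int_0^L v_x(\gamma v_x-p_x)dx$, and a single application of Young's inequality with parameter $\eta_5$ splits this into the $\frac{\alpha_1}{4\eta_5}\int_0^L|v_x|^2dx$ and $\alpha_1\eta_5\int_0^L|\gamma v_x-p_x|^2dx$ contributions. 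This cross term is precisely the new feature relative to Lemma \ref{Le4.2}, where the multiplier $v$ gives $-\alpha_1\int_0^L|v_x|^2dx$ outright.

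Finally I would estimate the two damping terms $-\delta_1(t)\int_0^L v_t(\gamma v-p)dx$ and $-\delta_2(t)\int_0^L v_t(x,t-\tau(t))(\gamma v-p)dx$ by Young's inequality with parameters $\eta_3,\eta_4$, and then invoke the Poincaré inequality $\int_0^L|\gamma v-p|^2dx\le c'\int_0^L|\gamma v_x-p_x|^2dx$, legitimate since $(\gamma v-p)(0)=0$, to convert the $L^2$ norms of $\gamma v-p$ into $\int_0^L|\gamma v_x-p_x|^2dx$; this yields the $\frac{\delta_1(t)}{4\eta_3}$ and $\frac{|\delta_2(t)|}{4\eta_4}$ coefficients and the $c'\delta_1(t)\eta_3$, $c'|\delta_2(t)|\eta_4$ pieces of the last coefficient. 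The two first-order terms expand to $\rho\gamma\int_0^L|v_t|^2dx-\gamma\mu\int_0^L|p_t|^2dx-\rho\int_0^L v_t p_t\,dx+\gamma^2\mu\int_0^L v_t p_t\,dx$; bounding the two mixed integrals separately by Young's inequality with $\eta_1$ and $\eta_2$ produces the $\frac{\rho}{4\eta_1}$ and $\frac{\gamma^2\mu}{4\eta_2}$ contributions to the $|v_t|^2$ coefficient, while the accompanying positive multiples of $\int_0^L|p_t|^2dx$ are absorbed into $-\gamma\mu\int_0^L|p_t|^2dx$ by taking $\eta_1,\eta_2$ small enough to leave $-\frac{\gamma\mu}{2}\int_0^L|p_t|^2dx$. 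Collecting all the pieces gives \eqref{5.3}.
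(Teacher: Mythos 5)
Your proposal is correct and follows essentially the same route as the paper: the paper multiplies \eqref{1.1} by $\gamma v-p$ and uses \eqref{1.2} in the form $\beta(\gamma v-p)_{xx}=-\mu p_{tt}$ to recognize $K_2'(t)$, which is exactly the cancellation you obtain by differentiating $K_2$ and adding the substituted expressions for $\rho v_{tt}$ and $\gamma\mu p_{tt}$; the subsequent Young/Poincar\'e estimates with parameters $\eta_1,\dots,\eta_5$ and the smallness choice of $\eta_1,\eta_2$ to retain $-\frac{\gamma\mu}{2}\int_0^L|p_t|^2dx$ are identical. Your treatment of the boundary terms (deriving $v_x(L)=0$ from \eqref{2.11}--\eqref{2.12} and $\alpha=\alpha_1+\gamma^2\beta$) is in fact more explicit than the paper's, which passes over this point silently.
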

	\begin{proof}
		Multiplying \eqref{1.1} by $(\gamma v-p)$, and integrating by parts on (0,$L$), we get
		\begin{align*}
			&\rho\int_0^Lv_{tt}(\gamma v-p)dx+\alpha_1\int_0^Lv_x(\gamma v-p)_xdx-\gamma\beta\int_0^L(\gamma v-p)_{xx}(\gamma v-p)dx\\
			&+\delta_1(t)\int_0^Lv_t(\gamma v-p)dx+\delta_2(t)\int_0^Lv_t(x,t-\tau(t))(\gamma v-p)dx=0.
		\end{align*}
	Using the second equation of system \eqref{2.8}-\eqref{2.15}, we know $\beta(\gamma v-p)_{xx}=-\mu p_{tt}$, and then we have
	\begin{align*}
		&\rho\int_0^Lv_{tt}(\gamma v-p)dx+\alpha_1\int_0^Lv_x(\gamma v-p)_xdx+\gamma\mu\int_0^Lp_{tt}(\gamma v-p)dx\\
		&+\delta_1(t)\int_0^Lv_t(\gamma v-p)dx+\delta_2(t)\int_0^Lv_t(x,t-\tau(t))(\gamma v-p)dx=0,
	\end{align*}
and then
\begin{align*}
	&\dfrac{d}{dt}\bigg(\rho\int_0^Lv_t(\gamma v-p)dx+\int_0^L\gamma\mu p_t(\gamma v-p)dx\bigg)+\alpha_1\int_0^Lv_x(\gamma v-p)_xdx\\
	&-\rho\gamma\int_0^Lv_t^2dx+\rho\int_0^Lv_tp_tdx-\gamma^2\mu\int_0^Lv_tp_tdx+\gamma\mu\int_0^L|p_t|^2dx\\
	&+\delta_1(t)\int_0^Lv_t(\gamma v-p)dx+\delta_2(t)\int_0^Lv_t(x,t-\tau(t))(\gamma v-p)dx=0.
\end{align*}
Using Young's inequality, we get
\begin{align*}
	\dfrac{d}{dt} &\left(\rho\int_0^Lv_t(\gamma v-p)dx+\int_0^L\gamma\mu p_t(\gamma v-p)dx\right)\\
	\leq&\left(\rho\gamma+\dfrac{\rho}{4\eta_1}+\dfrac{\gamma^2\mu}{4\eta_2}+\dfrac{\delta_1(t)}{4\eta_3}\right)\int_0^L|v_t|^2dx+\dfrac{|\delta_2(t)|}{4\eta_4}\int_0^L|v_t(x,t-\tau(t))|^2dx+\dfrac{\alpha_1}{4\eta_5}\int_0^L|v_x|^2dx\\
	&-(\gamma\mu-\rho\eta_1-\gamma^2\mu\eta_2)\int_0^L|p_t|^2dx+(\alpha_1\eta_5+c'\delta_1(t)\eta_3+c'|\delta_2(t)|\eta_4)\int_0^L|\gamma v_x-p_x|^2dx.
\end{align*}
By choosing suitable constants $\eta_1$ and $\eta_2$, we obtain
\begin{align*}
	\dfrac{d}{dt} & \left(\rho\int_0^Lv_t(\gamma v-p)dx+\int_0^L\gamma\mu p_t(\gamma v-p)dx\right)\\
	\leq&\left(\rho\gamma+\dfrac{\rho}{4\eta_1}+\dfrac{\gamma^2\mu}{4\eta_2}+\dfrac{\delta_1(t)}{4\eta_3}\right)\int_0^L|v_t|^2dx
	+\dfrac{|\delta_2(t)|}{4\eta_4}\int_0^L|v_t(x,t-\tau(t))|^2dx-\dfrac{\gamma\mu}{2}\int_0^L|p_t|^2dx\\
	&+\dfrac{\alpha_1}{4\eta_5}\int_0^L|v_x|^2dx
+(\alpha_1\eta_5+c'\delta_1(t)\eta_3+c'|\delta_2(t)|\eta_4)\int_0^L|\gamma v_x-p_x|^2dx.
\end{align*}
The proof is completed.
	\end{proof}

\begin{lemma}\label{Le4.4}
	Let $($v,p$)$ be the solution of system \eqref{2.8}-\eqref{2.15}. Then the functional
	\begin{align}\label{k3}
		K_3(t)=\int_0^L\rho v_tvdx+\int_0^L\mu p_tpdx
	\end{align}
	satisfies the following estimate:
	\begin{align}\label{5.4}
		K_3'(t)\leq&(\rho+\varepsilon_3\delta_1(t))\int_0^L|v_t|^2dx+|\delta_2(t)|\varepsilon_4\int_0^L|v_t(x,t-\tau(t))|^2dx+\mu\int_0^L|p_t|^2dx\nonumber\\
		&-(\alpha_1-c_2(t))\int_0^L|v_x|^2dx-\beta\int_0^L|\gamma v_x-p_x|^2dx,
	\end{align}
	where $c_2(t)=\dfrac{c'\delta_1(t)}{4\varepsilon_3}+\dfrac{c'|\delta_2(t)|}{4\varepsilon_4}$.
\end{lemma}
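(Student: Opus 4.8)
The plan is to mimic exactly the multiplier scheme used in Lemmas \ref{Le4.2} and \ref{Le4.3}, but now pairing the velocity $v_t$ with $v$ and the electric-displacement velocity $p_t$ with $p$ (rather than with the combination $\gamma v-p$). First I would differentiate the functional $K_3(t)=\int_0^L\rho v_tv\,dx+\int_0^L\mu p_tp\,dx$ directly, producing
\begin{align*}
K_3'(t)=\rho\int_0^L v_{tt}v\,dx+\rho\int_0^L|v_t|^2dx+\mu\int_0^L p_{tt}p\,dx+\mu\int_0^L|p_t|^2dx.
\end{align*}
The two second-order terms are then replaced using the PDEs \eqref{2.8} and \eqref{2.9}. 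Multiplying \eqref{1.1} by $v$ and integrating by parts over $(0,L)$ (using $\alpha=\alpha_1+\gamma^2\beta$ as in Lemma \ref{Le4.2}) converts $\rho\int v_{tt}v$ into the stiffness term $-\alpha_1\int|v_x|^2dx$ plus a coupling term and the two damping terms $-\int(\delta_1 v_tv+\delta_2 v_t(x,t-\tau(t))v)\,dx$. Multiplying \eqref{1.2} by $p$ and integrating by parts converts $\mu\int p_{tt}p$ into $-\beta\int|p_x|^2dx+\gamma\beta\int v_x p_x\,dx$.

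The key algebraic point, and the step I expect to be the only mild obstacle, is recombining the stiffness contributions into the natural energy quantity $\int_0^L|\gamma v_x-p_x|^2dx$. After integration by parts the elastic terms read (schematically) $-\alpha_1\int|v_x|^2dx-\beta\int|p_x|^2dx+2\gamma\beta\int v_xp_x\,dx$ together with the leftover coupling from the $v$-equation; I would complete the square to write $-\beta\int|p_x|^2dx+2\gamma\beta\int v_xp_x\,dx-\gamma^2\beta\int|v_x|^2dx=-\beta\int|\gamma v_x-p_x|^2dx$, which is precisely what produces the $-\beta\int_0^L|\gamma v_x-p_x|^2dx$ term in \eqref{5.4} while leaving the clean $-\alpha_1\int|v_x|^2dx$ in front. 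This is the same cancellation that underlies the energy identity \eqref{energy'}, so I would lean on the identity $\alpha=\alpha_1+\gamma^2\beta$ to make the bookkeeping transparent.

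Finally I would estimate the two damping cross-terms by Young's inequality. The term $-\delta_1(t)\int v_tv\,dx$ is bounded by $\varepsilon_3\delta_1(t)\int|v_t|^2dx+\frac{c'\delta_1(t)}{4\varepsilon_3}\int|v_x|^2dx$ after using the Poincar\'e inequality on $H_\ast^1(0,L)$ (which supplies the constant $c'$ converting $\int|v|^2$ into $\int|v_x|^2$), and similarly $-\delta_2(t)\int v_t(x,t-\tau(t))v\,dx$ is bounded by $\varepsilon_4|\delta_2(t)|\int|v_t(x,t-\tau(t))|^2dx+\frac{c'|\delta_2(t)|}{4\varepsilon_4}\int|v_x|^2dx$. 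Collecting the $\int|v_x|^2$ coefficients into $c_2(t)=\frac{c'\delta_1(t)}{4\varepsilon_3}+\frac{c'|\delta_2(t)|}{4\varepsilon_4}$ and gathering the $\int|v_t|^2$ and $\int|p_t|^2$ terms yields exactly \eqref{5.4}, which completes the argument.
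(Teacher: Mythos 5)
Your proposal is correct and follows essentially the same route as the paper's own proof: multiply \eqref{1.1} by $v$ and \eqref{1.2} by $p$, integrate by parts using the boundary conditions \eqref{1.5}--\eqref{1.6} together with $\alpha=\alpha_1+\gamma^2\beta$, recombine the cross terms into $-\beta\int_0^L|\gamma v_x-p_x|^2dx$, and close with Young's and Poincar\'e's inequalities to produce $c_2(t)$. Your completing-the-square identity $-\beta\int|p_x|^2+2\gamma\beta\int v_xp_x-\gamma^2\beta\int|v_x|^2=-\beta\int|\gamma v_x-p_x|^2$ is exactly the algebra the paper performs by writing the two coupling terms as multiples of $(\gamma v_x-p_x)$ before adding them, so no genuinely different idea is involved.
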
	
	\begin{proof}
		Multiplying \eqref{1.1} by $v$, and integrating by parts on ($0$,$L$), we get
		\begin{align*}
			\rho\int_0^Lv_{tt}vdx-\alpha\int_0^Lv_{xx}vdx+\gamma\beta\int_0^Lp_{xx}vdx+\delta_1(t)\int_0^Lv_tvdx+\delta_2(t)\int_0^Lvv_t(x,t-\tau(t))=0,
		\end{align*}
	and then
		\begin{align*}
			\dfrac{d}{dt}\left(\rho\int_0^Lv_tvdx\right)=&\rho\int_0^L|v_t|^2dx-\alpha_1\int_0^L|v_x|^2dx-\gamma\beta\int_0^L(\gamma v_x-p_x)v_xdx\\
			&-\int_0^L(\delta_1(t)v_tv+\delta_2(t)vv_t(x,t-\tau(t)))dx.
		\end{align*}
	Multiplying \eqref{1.2} by $p$, and integrating by parts on (0,$L$), we obtain	
		\begin{align*}
			\dfrac{d}{dt}\left(\mu\int_0^Lp_tpdx\right)=\mu\int_0^L|p_t|^2dx+\beta\int_0^L(\gamma v_x-p_x)v_xdx.
		\end{align*}
	Adding the above two equations, we obtain
	\begin{align*}
		\dfrac{d}{dt}\left(\rho\int_0^Lv_tvdx+\mu\int_0^Lp_tpdx\right)=&\rho\int_0^L|v_t|^2dx-\alpha_1\int_0^L|v_x|^2dx\\
		&-\beta\int_0^L|\gamma v_x-p_x|^2dx+\mu\int_0^L|p_t|^2dx\\
		&-\int_0^L(\delta_1(t)v_tv+\delta_2(t)vv_t(x,t-\tau(t)))dx.
	\end{align*}
Using Young's inequality and Poincar\'e inequality, we obtain
		\begin{align*}
			\dfrac{d}{dt}\left(\rho\int_0^Lv_tvdx+\mu\int_0^Lp_tpdx\right)&\leq\rho\int_0^L|v_t|^2dx-\alpha_1\int_0^L|v_x|^2dx+\mu\int_0^L|p_t|^2dx\\
			&\quad-\beta\int_0^L|\gamma v_x-p_x|^2dx+\varepsilon_3\delta_1(t)\rho\int_0^L|v_t|^2dx+\dfrac{\delta_1(t)}{4\varepsilon_3}\int_0^L|v|^2dx\\
			&\quad+\varepsilon_4|\delta_2(t)|\int_0^L|v_t(x,t-\tau(t))|^2dx+\dfrac{|\delta_2(t)|}{4\varepsilon_4}\int_0^L|v|^2dx\\
			&\leq(\rho+\varepsilon_3\delta_1(t))\int_0^L|v_t|^2dx+|\delta_2(t)|\varepsilon_4\int_0^L|v_t(x,t-\tau(t))|^2dx\\
			&\quad+\mu\int_0^L|p_t|^2dx-\left(\alpha_1-\dfrac{c'\delta_1(t)}{4\varepsilon_3}-\dfrac{c'|\delta_2(t)|}{4\varepsilon_4}\right)\int_0^L|v_x|^2dx\\
			&\quad-\beta\int_0^L|\gamma v_x-p_x|^2dx.
		\end{align*}
		The proof is completed.
	\end{proof}

In order to prove Theorem 4.1 below, we introduce the perturbed funcitonal $\mathcal{L}(t)=NE(t)+N_1K_1(t)+N_2K_2(t)+N_3K_3(t)$. Besides, by using Young's and Poincar\'e's Theorem, we know that $\mathcal{L}(t)$ is equivalent to $E(t)$, which means that, for some constants $b_1,b_2>0$, the following inequalities hold:
\begin{align}\label{4.10}
	b_1E(t)\leq\mathcal{L}(t)\leq b_2E(t).
\end{align}

\begin{theorem}\label{th4.6}
	Under assumptions of \eqref{2.1} and \eqref{2.3}. There exist positive constants $H_1,H_2$ such that
	\begin{align*}
		E(t)\leq H_1E(0)e^{-H_2t},
	\end{align*}
 for any solution of system \eqref{2.8}-\eqref{2.15}.
\end{theorem}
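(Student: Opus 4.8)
The plan is to differentiate the perturbed functional $\mathcal{L}(t)=NE(t)+N_1K_1(t)+N_2K_2(t)+N_3K_3(t)$ and insert the four estimates \eqref{energy'}, \eqref{5.2}, \eqref{5.3} and \eqref{5.4}, then choose the weights $N,N_1,N_2,N_3$ together with the Young parameters $\varepsilon,\varepsilon_i,\eta_i$ so that $\mathcal{L}'(t)\leq -kE(t)$ for some $k>0$. The first observation is that $E'(t)$ is the only term producing negative contributions in $\int_0^L v_t^2\,dx$, in $\int_0^L v_t^2(x,t-\tau(t))\,dx$ and in the delay integral $\int_{t-\tau(t)}^t\int_0^L e^{\lambda(s-t)}v_t^2\,dx\,ds$, each with coefficient $-NC$; since these quantities receive only finite positive contributions from $K_1',K_2',K_3'$ once the other constants are fixed, they will be made negative at the very end by taking $N$ large. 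Hence the genuine work lies in the three remaining terms $\int_0^L p_t^2\,dx$, $\int_0^L v_x^2\,dx$ and $\int_0^L|\gamma v_x-p_x|^2\,dx$, which do not appear in \eqref{energy'} and must be driven negative using $K_1,K_2,K_3$ alone.

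Collecting coefficients, I must arrange simultaneously: (a) $N_1\frac{\gamma\mu}{4\varepsilon}-N_2\frac{\gamma\mu}{2}+N_3\mu<0$ for $\int_0^L p_t^2\,dx$; (b) $-N_1(\alpha_1-c_1)+N_2\frac{\alpha_1}{4\eta_5}-N_3(\alpha_1-c_2)<0$ for $\int_0^L v_x^2\,dx$; and (c) $N_2(\alpha_1\eta_5+c'\delta_1\eta_3+c'|\delta_2|\eta_4)-N_3\beta<0$ for $\int_0^L|\gamma v_x-p_x|^2\,dx$. Beforehand I would fix $\varepsilon_1,\varepsilon_2,\varepsilon_3,\varepsilon_4$ large enough that $\alpha_1-c_1(t)>0$ and $\alpha_1-c_2(t)>0$; since $\delta_1$ is non-increasing and $|\delta_2(t)|\leq\beta_0\delta_1(t)\leq\beta_0\delta_1(0)$, the quantities $c_1(t),c_2(t)$ are bounded by their values at $t=0$, so these sign conditions hold uniformly in $t$.

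The main obstacle is the apparent circularity among (a), (b), (c): condition (a) forces $N_2$ to dominate $N_1$ and $N_3$, condition (c) forces $N_3$ to dominate $N_2$, and condition (b) forces $N_1$ to dominate $N_2$. I would break the loop by first choosing $\eta_3,\eta_4,\eta_5$ so small that the cross coefficient $A:=\alpha_1\eta_5+c'\delta_1(0)\eta_3+c'\beta_0\delta_1(0)\eta_4$ satisfies $2A/(\gamma\beta)<1$; then (c) only requires $N_3>(A/\beta)N_2$ with $A/\beta$ small, so that the lower bound on $N_2$ coming from (a), namely $N_2>\frac{N_1}{2\varepsilon(1-2A/\gamma\beta)}$, and the lower bound on $N_1$ coming from (b), namely $N_1>N_2\frac{\alpha_1}{4\eta_5(\alpha_1-c_1)}$ (the $N_3$ contribution to (b) being of relative order $\eta_5^2$ and discardable), become compatible once $\varepsilon$ is taken large enough that $2\varepsilon(1-2A/\gamma\beta)>\frac{\alpha_1}{4\eta_5(\alpha_1-c_1)}$. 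Enlarging $\varepsilon$ is harmless because it only shrinks the coefficient $\frac{\gamma\mu}{4\varepsilon}$ in (a) while inflating the $\int_0^L v_t^2\,dx$ coefficient of $K_1'$, which is later absorbed by $N$. With $\varepsilon,\varepsilon_i,\eta_i$ fixed, I then select $N_2$, an admissible $N_1$ in the resulting interval, and $N_3$ from (c), so that (a)--(c) hold with strict margins.

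Finally, with $N_1,N_2,N_3$ and all Young parameters fixed, I take $N$ so large that the coefficients of $\int_0^L v_t^2\,dx$, $\int_0^L v_t^2(x,t-\tau(t))\,dx$ and of the delay integral become negative as well. The result is a bound $\mathcal{L}'(t)\leq -m_1\int_0^L v_t^2\,dx-m_2\int_0^L p_t^2\,dx-m_3\int_0^L v_x^2\,dx-m_4\int_0^L|\gamma v_x-p_x|^2\,dx-m_5\int_{t-\tau(t)}^t\int_0^L e^{\lambda(s-t)}v_t^2\,dx\,ds$ with all $m_i>0$ (the extra $v_t^2(x,t-\tau(t))$ term, absent from the energy, only adds negativity); comparing term by term with \eqref{energy}, and using $\xi(t)\leq\overline{\xi}\delta_1(0)$ for the delay part, gives $\mathcal{L}'(t)\leq -kE(t)$ for some $k>0$. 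The equivalence \eqref{4.10} then yields $\mathcal{L}'(t)\leq -\frac{k}{b_2}\mathcal{L}(t)$, so Gronwall's inequality gives $\mathcal{L}(t)\leq\mathcal{L}(0)e^{-(k/b_2)t}$; a second application of \eqref{4.10} converts this into $E(t)\leq\frac{b_2}{b_1}E(0)e^{-(k/b_2)t}$, the desired estimate with $H_1=b_2/b_1$ and $H_2=k/b_2$.
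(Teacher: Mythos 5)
Your proposal is correct and follows essentially the same route as the paper: the same functional $\mathcal{L}=NE+N_1K_1+N_2K_2+N_3K_3$, the same four estimates \eqref{energy'}, \eqref{5.2}, \eqref{5.3}, \eqref{5.4}, the same goal of making the coefficients of $\int_0^L p_t^2dx$, $\int_0^L v_x^2dx$, $\int_0^L|\gamma v_x-p_x|^2dx$ negative before absorbing the remaining terms with a large $N$, and the same conclusion via the equivalence \eqref{4.10} and Gronwall. The only difference is bookkeeping: you break the circular dependence among $N_1,N_2,N_3$ by first fixing $\eta_3,\eta_4,\eta_5$ small and $\varepsilon$ large, whereas the paper ties the Young parameters to the weights ($\varepsilon=N_1$, $\eta_5=\frac{1}{N_2\alpha_1}$, $\eta_3=\frac{1}{N_2c'\delta_1(0)}$, $\eta_4=\frac{1}{N_2c'\beta_0\delta_1(0)}$), which collapses the cross-coefficients to the constants $\frac{\gamma\mu}{4}$ and $3$ and permits the sequential choice $N_3$, then $N_2$, then $N_1$, then $N$; both devices are valid.
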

\begin{proof}
	Differentiating the perturbed Lyapunov functional $\mathcal{L}(t)$, we get
	\begin{align*}
		\mathcal{L}'(t)=NE'(t)+N_1K_1'(t)+N_2K_2'(t)+N_3K_3'(t).
	\end{align*}
Recalling \eqref{energy'}, \eqref{5.2}, \eqref{5.3}, and \eqref{5.4}, we have
\begin{align*}
	\mathcal{L}'(t)\leq&-\bigg[CN-N_1(\rho+\varepsilon\gamma\mu+\varepsilon_1\delta_1(t))-N_2\left(\rho\gamma+\dfrac{\rho}{4\eta_1}+\dfrac{\gamma^2\mu}{4\eta_2}+\dfrac{\delta_1(t)}{4\eta_3}\right)\\
	&\quad-N_3(\rho+\varepsilon_3\delta_1(t))\bigg]\int_0^L|v_t|^2dx\\
	&-\left(CN-N_1\varepsilon_2|\delta_2(t)|-N_2\dfrac{|\delta_2(t)|}{4\eta_4}-N_3\varepsilon_4|\delta_2(t)|\right)\int_0^L|v_t(x,t-\tau(t))|^2dx\\
	&-\left(N_2\dfrac{\gamma\mu}{2}-N_1\dfrac{\gamma\mu}{4\varepsilon_4}-N_3\mu\right)\int_0^L|p_t|^2dx\\
	&-\bigg[N_1\left(\alpha_1-\dfrac{c'\delta_1(t)}{4\varepsilon_1}-\dfrac{c'|\delta_2(t)|}{4\varepsilon_2}\right)-N_2\dfrac{\alpha_1}{4\eta_5}\\
	&\quad+N_3\left(\alpha_1-\dfrac{c'\delta_1(t)}{4\varepsilon_3}-\dfrac{c'|\delta_2(t)|}{4\varepsilon_4}\right)\bigg]\int_0^L|v_x|^2dx\\
	&-[N_3\beta-N_2(\alpha_1\eta_5+c'\delta_1(t)\eta_3+c'|\delta_2(t)|\eta_4)]\int_0^L|\gamma v_x-p_x|^2dx\\
	&-CN\int_{t-\tau(t)}^t\int_0^Le^{\lambda(s-t)}|v_t|^2dxds.
\end{align*}
From \eqref{as1}, \eqref{as2} and \eqref{as3}, we know that
\begin{align*}
	\delta_{1}(t)\leq\delta_{1}(0)\quad and\quad |\delta_{2}(t)|\leq\beta_0\delta_{1}(t)\leq\beta_0\delta_{1}(0).
\end{align*}
And then
\begin{align*}
	\mathcal{L}'(t)\leq&-\bigg[CN-N_1(\rho+\varepsilon\gamma\mu+\varepsilon_1\delta_1(0))-N_2\left(\rho\gamma+\dfrac{\rho}{4\eta_1}+\dfrac{\gamma^2\mu}{4\eta_2}+\dfrac{\delta_1(0)}{4\eta_3}\right)\\
	&\quad-N_3(\rho+\varepsilon_3\delta_1(0))\bigg]\int_0^L|v_t|^2dx\\
	&-\left(CN-N_1\varepsilon_2\beta_0\delta_{1}(0)-N_2\dfrac{\beta_0\delta_{1}(0)}{4\eta_4}-N_3\varepsilon_4\beta_0\delta_{1}(0)\right)\int_0^L|v_t(x,t-\tau(t))|^2dx\\
	&-\left(N_2\dfrac{\gamma\mu}{2}-N_1\dfrac{\gamma\mu}{4\varepsilon_4}-N_3\mu\right)\int_0^L|p_t|^2dx\\
	&-\bigg[N_1\left(\alpha_1-\dfrac{c'\delta_1(0)}{4\varepsilon_1}-\dfrac{c'\beta_0\delta_{1}(0)}{4\varepsilon_2}\right)-N_2\dfrac{\alpha_1}{4\eta_5}\\
	&\quad+N_3\left(\alpha_1-\dfrac{c'\delta_1(0)}{4\varepsilon_3}-\dfrac{c'\beta_0\delta_{1}(0)}{4\varepsilon_4}\right)\bigg]\int_0^L|v_x|^2dx\\
	&-\left[N_3\beta-N_2(\alpha_1\eta_5+c'\delta_1(0)\eta_3+c'\beta_0\delta_{1}(0)\eta_4)\right]\int_0^L|\gamma v_x-p_x|^2dx\\
	&-CN\int_{t-\tau(t)}^t\int_0^Le^{\lambda(s-t)}|v_t|^2dxds.
\end{align*}
By taking $\varepsilon=N_1,\eta_5=\dfrac{1}{N_2\alpha_1},\eta_3=\dfrac{1}{N_2c'\delta_1(0)},\eta_4=\dfrac{1}{N_2c'\beta_0\delta_{1}(0)}$, we have
\begin{align*}
	\mathcal{L}'(t)\leq&-\bigg[CN-N_1(\rho+\varepsilon\gamma\mu+\varepsilon_1\delta_1(0))-N_2\left(\rho\gamma+\dfrac{\rho}{4\eta_1}+\dfrac{\gamma^2\mu}{4\eta_2}+\dfrac{\delta_1(0)}{4\eta_3}\right)\\
	&\quad-N_3(\rho+\varepsilon_3\delta_1(0))\bigg]\int_0^L|v_t|^2dx\\
	&-\left(CN-N_1\varepsilon_2\beta_0\delta_{1}(0)-N_2\dfrac{\beta_0\delta_{1}(0)}{4\eta_4}-N_3\varepsilon_4\beta_0\delta_{1}(0)\right)\int_0^L|v_t(x,t-\tau(t))|^2dx\\
	&-\left(N_2\dfrac{\gamma\mu}{2}-\dfrac{\gamma\mu}{4}-N_3\mu\right)\int_0^L|p_t|^2dx\\
	&-\bigg[N_1\left(\alpha_1-\dfrac{c'\delta_1(0)}{4\varepsilon_1}-\dfrac{c'\beta_0\delta_{1}(0)}{4\varepsilon_2}\right)-N_2\dfrac{\alpha_1}{4\eta_5}\\
	&\quad+N_3\left(\alpha_1-\dfrac{c'\delta_1(0)}{4\varepsilon_3}-\dfrac{c'\beta_0\delta_{1}(0)}{4\varepsilon_4}\right)\bigg]\int_0^L|v_x|^2dx\\
	&-(N_3\beta-3)\int_0^L|\gamma v_x-p_x|^2dx -CN\int_{t-\tau(t)}^t\int_0^Le^{\lambda(s-t)}|v_t|^2dxds.
\end{align*}
Now, we choose $N_3$ large enough such that
\begin{align*}
	N_3\beta-3>1,
\end{align*}
then $N_2$ large enough such that
\begin{align*}
	N_2\dfrac{\gamma\mu}{2}-\dfrac{\gamma\mu}{4}-N_3\mu>1.
\end{align*}
Moreover, we choose $N_1$ large enough to satisfy
\begin{align*}
	N_1\left(\alpha_1-\dfrac{c'\delta_1(0)}{4\varepsilon_1}-\dfrac{c'\beta_0\delta_{1}(0)}{4\varepsilon_2}\right)-N_2\dfrac{\alpha_1}{4\eta_5}+N_3\left(\alpha_1-\dfrac{c'\delta_1(0)}{4\varepsilon_3}-\dfrac{c'\beta_0\delta_{1}(0)}{4\varepsilon_4}\right)>1.
\end{align*}
Eventually, we choose $N$ large enough such that
\begin{align*}
	CN-N_1(\rho+\varepsilon\gamma\mu+\varepsilon_1\delta_1(0))-N_2\left(\rho\gamma+\dfrac{\rho}{4\eta_1}+\dfrac{\gamma^2\mu}{4\eta_2}+\dfrac{\delta_1(0)}{4\eta_3}\right)-N_3(\rho+\varepsilon_3\delta_1(0))>1,
\end{align*}
and
\begin{align*}
	CN-N_1\varepsilon_2\beta_0\delta_{1}(0)-N_2\dfrac{\beta_0\delta_{1}(0)}{4\eta_4}-N_3\varepsilon_4\beta_0\delta_{1}(0)>1.
\end{align*}
Then we reach an agreement that
\begin{align*}
	\mathcal{L}'(t)\leq&-\int_0^L|v_t|^2dx-\int_0^L|p_t|^2dx-\int_0^L|v_x|^2dx\\
	&-\int_0^L|\gamma v_x-p_x|^2dx-\int_{t-\tau(t)}^t\int_0^Le^{\lambda(s-t)}|v_t|^2dxdt,
\end{align*}
which implies that, for some $H_2$,
\begin{align*}
	\mathcal{L}'(t)\leq-H_2E(t).
\end{align*}
Under the conclusion \eqref{4.10}, we get
\begin{align*}
E(t)\leq H_1E(0)e^{-H_2t}.
\end{align*}
The proof is completed.
\end{proof}

\subsection*{Acknowledgments}
This work was supported by the National Natural Science Foundation of China (Grant
No. 11771216), the Key Research and Development Program of Jiangsu Province (Social Development)
(Grant No. BE2019725), the Six Talent Peaks Project in Jiangsu Province (Grant No. 2015-XCL-020) and the
Postgraduate Research and Practice Innovation Program of Jiangsu Province (Grant No. KYCX20\_0945).

\end{document}